\title[Preservers of absolutely compatible pairs]{Contractive linear preservers of absolutely compatible pairs between C$^*$-algebras}
\author{Nabin K. Jana, Anil K. Karn}
\address{School of Mathematical Science, National Institute of Science Education and
Research, HBNI, Bhubaneswar, At \& Post - Jatni, PIN - 752050, India.}
\email{nabinjana@niser.ac.in; anilkarn@niser.ac.in}
\author[A.M. Peralta]{Antonio M. Peralta}
\address{Departamento de An{\'a}lisis Matem{\'a}tico, Facultad de
Ciencias, Universidad de Granada, 18071 Granada, Spain.}
\email{aperalta@ugr.es}
\subjclass[msc2010]{Primary 46L10; Secondary 46B40 46L05.}
\keywords{Absolute compatibility, commutativity, C$^{\ast}$-algebra, von Neumann algebra, projection, partial isometry, linear absolutely compatible preservers}
\date{}
\newtheorem{theorem}{Theorem}[section]
\newtheorem{proposition}[theorem]{\bf Proposition}
\newtheorem{corollary}[theorem]{\bf Corollary}
\newtheorem{definition}[theorem]{\bf Definition}
\newtheorem{remark}[theorem]{\bf Remark}
\begin{document}

\begin{abstract} Let $a$ and $b$ be elements in the closed ball of a unital C$^*$-algebra $A$ (if $A$ is not unital we consider its natural unitization). We shall say that $a$ and $b$ are domain (respectively, range) {absolutely compatible} {\rm(}$a\triangle_d b$, respectively, $a\triangle_r b$, in short{\rm)} if $\Big| |a| -|b| \Big| + \Big| 1-|a|-|b| \Big| =1$ (respectively, $\Big| |a^*| -|b^*| \Big| + \Big| 1-|a^*|-|b^*| \Big| =1$), where $|a|^2= a^* a$. We shall say that $a$ and $b$ are {absolutely compatible} {\rm(}$a\triangle b$ in short{\rm)} if they are both range and domain absolutely compatible. In general, $a\triangle_d b$ (respectively, $a\triangle_r b$ and $a\triangle b$) is strictly weaker than $ab^*=0 $ (respectively, $a^* b =0$ and $a\perp b$). Let $T: A\to B$ be a contractive linear mapping between C$^*$-algebras. We prove that if $T$ preserves domain absolutely compatible elements {\rm(}i.e., $a\triangle_d b\Rightarrow T(a)\triangle_d T(b)${\rm)} then $T$ is a triple homomorphism. A similar statement is proved when $T$ preserves range absolutely compatible elements. It is finally shown that $T$ is a triple homomorphism if, and only if, $T$ preserves absolutely compatible elements.
\end{abstract}

\maketitle

\section{Introduction}

In a C$^*$-algebra $A$ we can consider two natural relations among pairs of elements these notions are ``zero product'' and ``orthogonality''. Elements $a,b$ in $A$ have \emph{zero product} (respectively, \emph{zero products}) if $a b=0$ (respectively, $a b = ba =0$), while $a$ and $b$ are called \emph{orthogonal} ($a\perp b$ in short) if $a b^* = b^* a =0$. Being orthogonal is a symmetric relation. A couple of non-symmetric relations can be defined by $a b^*=0$ and $b^* a = 0$, respectively.\smallskip

Orthogonality is defined in terms of the product and involution of the C$^*$-algebra. Several characterizations of orthogonality can be found, for example, in \cite[Corollary 8]{FerMarPe2012} and \cite[Theorem 2.1]{Karn2018}.
For an order-theoretic approach, we present some notation. Given an element $a$ in a C$^*$-algebra $A$, we shall write $|a| = (a^* a)^{1/2}$ for the absolute value of the element $a$ in $A$. Let $a$ and $b$ be two positive elements in the closed unit ball of a unital C$^*$-algebra $A$, then by \cite[Proposition 4.1]{Karn2018}, $a$ is orthogonal to $b$ if, and only if, the following two conditions hold:\begin{enumerate}[$(O.1)$]\item $a+b\leq 1$;
	\item $\vert a - b \vert + \vert 1 - a - b \vert = 1.$
\end{enumerate}

The second condition above gives rise to the notion of absolute compatibility. Following the notation introduced in \cite{Karn2018}, given two positive elements $a$, $b$ in the closed unit ball of a unital C$^*$-algebra $A$, we shall say that $a$ is \emph{absolutely compatible} with $b$ ($a\triangle b$ in short) if $\vert a - b \vert + \vert 1 - a - b \vert = 1$. \smallskip

Clearly, the relation ``being absolutely compatible to'' is symmetric. By definition if $0\leq a$ is absolutely compatible with $0\leq b$ (with $a,b\in [0,1]_{A}= \{ x\in A: 0\leq x\leq 1 \}$), then $\{a, 1 - a \}$ is absolutely compatible with $\{ b, 1 - b \}$, where a set $S\subseteq A$ is said to be absolutely compatible with another set $T\subseteq A$ if each element in $S$ is absolutely compatible with every element of $T$.\smallskip

It is shown in \cite[Proposition 4.9]{Karn2018} that if $p$ is a projection and $a$ is an arbitrary positive element in the closed unit ball of a unital C$^*$-algebra, then $a\triangle p$ if, and only if, $a$ and $p$ commute. However, absolute compatibility is not, in general, equivalent to commutativity nor to orthogonality. For example, the elements $a = \left( \begin{array}{cc} \frac{2}{3} & \frac{1}{3} \\ \frac{1}{3} & \frac{1}{3} \end{array}
\right)$ and $b = \left( \begin{array}{cc} \frac{2}{3} & - \frac{1}{3} \\ - \frac{1}{3} & \frac{1}{3}
\end{array} \right)$ are positive and absolutely compatible in $M_2(\mathbb{C})$, with $a b \neq b a\neq 0$. The whole picture is well explained by the following strict inclusion $$\{ (a,b)\in A^2 : 0\leq a,b \leq 1,\ a\perp b  \} \subsetneqq \{ (a,b)\in A^2 : 0\leq a,b \leq 1,\ a\triangle b  \}.$$

Continuous linear maps between C$^*$-algebras preserving orthogonal elements have been widely studied. The pioneering study of M. Wolff in  \cite{Wolff94} provides a complete description of those continuous linear and symmetric operators between C$^*$-algebras preserving orthogonal elements.
\smallskip

Ng.-Ch. Wong proves in \cite[Theorem 3.2]{Wong2005} that a continuous linear operator $T$ between two C$^*$-algebras is a triple homomorphism (i.e. a linear mapping preserving triple products of the form $\{a,b,c\} = \frac12 (ab^* c + cb^* a)$) if, and only if, $T$ is orthogonality preserving and $T^{**} (1)$ is a partial isometry. It should be remarked here that a triple homomorphism between C$^*$-algebras (or, more generally, between JB$^*$-triples) is automatically continuous and contractive (see \cite[Lemma 1]{BarDanHorn}).\smallskip

Continuous linear maps between C$^*$-algebras preserving pairs of orthogonal elements have been completely determined in \cite{BurFerGarMarPe2008} (see \cite{BurFerGarPe2009} for a Jordan version). Briefly speaking, by \cite[Theorem 17 and Corollary 18]{BurFerGarMarPe2008}, given a bounded linear mapping $T$ between two C$^*$-algebras $A$ and $B$, then denoting $h= T^{**} (1)$ and $r= r(h)$ the range partial isometry of $h$ in $B^{**}$, it follows that $T$ is orthogonality preserving if, and only if, one of the equivalent statements holds:
\begin{enumerate}[{\rm $(a)$}] \item There exists a unique triple homomorphism $S: A
		\to B^{**}$ satisfying $h^* S(z) = S(z^*)^* h,$ $h S(z^*)^* = S(z)
		h^*,$ and $$T(z) = h r(h)^* S(z)=S(z) r(h)^* h, \hbox{ for all } z\in A;$$
		\item $T$ preserves zero triple products, that is, $\{{T(x)},{T(y)},{T(z)}\} = 0$ whenever $\{x,y,z\} =0$.
\end{enumerate} The hypothesis concerning continuity can be relaxed at the cost of assuming alternative assumptions, for example, every linear surjection $T$ between von Neumann algebras preserving orthogonality in both directions (i.e. $a\perp b$ $\Leftrightarrow$ $T(a)\perp T(b)$) is automatically continuous (see \cite[Theorem 19]{BurGarPe2011}). Additional results on automatic continuity of linear biorthogonality preservers on some other structures can be seen in \cite{OikPe2013,OikPePu2013,GarPe2014,Pe2017} and \cite{LiuChouLiaoWong2018}. For additional details on orthogonality preservers the reader is referred to the recent survey \cite{LiuChouLiaoWong2018b}.\smallskip

The previous results show that bounded linear operators $T$ which are orthogonality preserving are nothing but appropriate multiples of Jordan $^*$-homomorphisms after considering an alternative Jordan product in the target C$^*$-algebra.\smallskip

In view of the characterization of orthogonality established by conditions $(O.1)$ and $(O.2)$, the problem of determining those contrractive linear operators between C$^*$-algebras preserving absolutely compatible elements seems a natural step forward in the study of linear preservers.
First, we need to extend the notion of absolute compatibility to hermitian and general elements.

Let $a,b$ be elements in the closed ball of a unital C$^*$-algebra $A$ (if $A$ is not unital we consider its natural unitization). We shall say that $a$ and $b$ are \emph{domain} (respectively, \emph{range}) \emph{absolutely compatible} {\rm(}$a\triangle_d b$, respectively, $a\triangle_r b$, in short{\rm)} if $|a|$ and $|b|$ (respectively, if $|a^*|$ and $|b^*|$) are absolutely compatible, that is, $\Big| |a| -|b| \Big| + \Big| 1-|a|-|b| \Big| =1$ (respectively, $\Big| |a^*| -|b^*| \Big| + \Big| 1-|a^*|-|b^*| \Big| =1$). Finally, $a$ and $b$ are called \emph{absolutely compatible} {\rm(}$a\triangle b$ in short{\rm)} if they are range and domain absolutely compatible. Having this new notions in mind, we show that the following statements hold whenever $a$ and $b$ are elements in the closed unit ball of $A$:
\begin{enumerate}[$(a)$]
\item $a b^*  = 0$ if, and only if, $|a| + |b| \leq 1$ and $a\triangle_d b$ if, and only if, $|a| + |b| \leq 1$ and $a^*\triangle_r b^*$;
\item $b^* a =0$ if, and only if, $|a^*| + |b^*| \leq 1$ and $a^*\triangle_d b^*$ if, and only if, $|a^*| + |b^*| \leq 1$ and $a\triangle_r b$;
\item $a\perp b$ if, and only if, $|a| + |b| \leq 1,$ $|a^*| + |b^*| \leq 1,$  $a\triangle_r b$, and $a\triangle_d b$;
\end{enumerate} When $a$ and $b$ are self adjoint, then $a\perp b$ if, and only if, $|a| + |b| \leq 1,$ and $a\triangle b$ (see Proposition \ref{p characterization orthogonality for general elements}). These statements generalize the characterization in $(O.1)$ and $(O.2)$.\smallskip

Among our main results we prove that an element $a$ in the closed unit ball of a C$^*$-algebra is a projection if, and only if, $a\triangle a$. In particular a positive element $a$ in $A$ with $a\leq 1$ is a projection if, and only if, $a\triangle a$. This is the starting point to show that every contractive linear operator $T$ between von Neumann algebras preserving domain absolutely compatible elements {\rm(}i.e., $a\triangle_d b\Rightarrow T(a)\triangle_d T(b)${\rm)} or range absolutely compatible elements {\rm(}i.e., $a\triangle_r b\Rightarrow T(a)\triangle_r T(b)${\rm)} is a triple homomorphism (see Proposition \ref{p presevers on von Neumann} and Corollary \ref{c r presevers on Cstar}). The reciprocal implication is, in general, false (cf. Remark \ref{r examples star hom}).\smallskip

The existence of C$^*$-algebras admitting no non-trivial projections makes the problem of extending the above conclusion to contractive operators between general C$^*$-algebras a more difficult technical goal, which is finally established in Theorem \ref{t presevers on Cstar}. We finally prove that a contractive linear operator between two C$^*$-algebras preserves absolutely compatible elements {\rm(}i.e., $a\triangle b\Rightarrow T(a)\triangle T(b)${\rm)} if, and only if, $T$ is a triple homomorphism (see Corollary \ref{c AC presevers on Cstar}). It should be added here that, a priori, there exists no obvious connection between linear orthogonality preservers and absolute compatibility preservers.\smallskip

\section{Absolutely compatible elements in the closed unit ball of a C$^*$-algebra}

The range projection of an element $a$ in a von Neumann algebra $M$ is the smallest projection $p$ in $M$ satisfying $p a =a$. The range projection of $a$ will be denoted by $r(a)$ (see, for example, \cite[2.2.7]{Ped}).\smallskip

Henceforth the positive cone of a C$^*$-algebra $A$  will be denoted by $A^+$.\smallskip

Let $a$ and $b$ be elemnets in a C$^*$-algebra $A$. Following the notation in \cite{LiuChouLiaoWong2018b,LiuChouLiaoWong2018}, we shall say that $a$ and $b$ are \emph{range orthogonal} (respectively, \emph{domain orthogonal}) if $a^*b =0$, or equivalently, $|a^*| \perp |b^*|$  (respectively, if $ab^*=0$, or equivalently, $|a| \perp |b|$). Clearly, $a\perp b$ if, and only if, $a$ and $b$ are range and domain orthogonal (that is, $|a|\perp |b|$ and $|a^*|\perp |b^*|$). In order to keep a consistent notation we introduce the next definitions.

\begin{definition}\label{def range and domain AC} Let $a,b$ be elements in the closed ball, $\mathcal{B}_{A}$, of a unital C$^*$-algebra $A$. We shall say that $a$ and $b$ are \emph{domain absolutely compatible} {\rm(}$a\triangle_d b$ in short{\rm)} if $|a|$ and $|b|$ are absolutely compatible, that is, $$\Big| |a| -|b| \Big| + \Big| 1-|a|-|b| \Big| =1.$$ We shall say that $a$ and $b$ are \emph{range absolutely compatible} {\rm(}$a\triangle_r b$ in short{\rm)} if $|a^*|$ and $|b^*|$ are absolutely compatible, that is, $$\Big| |a^*| -|b^*| \Big| + \Big| 1-|a^*|-|b^*| \Big| =1.$$ Finally, $a$ and $b$ are called \emph{absolutely compatible} {\rm(}$a\triangle b$ in short{\rm)} if they are range and domain absolutely compatible.
\end{definition}

If $A$ is not unital we shall say that $a$ and $b$ are (\emph{range, domain}) \emph{absolutely compatible} if they are (range, domain) absolutely compatible in the unitization of $A$. Obviously, $a\triangle_d b$ if, and only if, $a^*\triangle_r b^*$.\smallskip

\begin{remark}\label{r new}{\rm Suppose $A$ is a commutative unital C$^*$-algebra of the form $A= C(\Omega)$, where $\Omega$ is a compact Hausdorff space. In this setting, given $f,g\in \mathcal{B}_{A}$, we have $f\triangle g$ if, and only if, $f\triangle_d g$  if, and only if, $f\triangle_r g$. Furthermore, $f\triangle_d g$ if, and only if, the following properties hold:\begin{enumerate}[$(a)$]\item $f(t) g(t) =0 $ for every $t\in \Omega$ such that $|f(t)|,|g(t)|<1$;
\item If $|g(t)| = 1$, for a certain $t\in \Omega$, then $f(t)$ can take any value in $\mathbb{B}_{\mathbb{C}}$;
\item If $|f(t)| = 1$, for a certain $t\in \Omega$, then $g(t)$ can take any value in $\mathbb{B}_{\mathbb{C}}$.
	\end{enumerate}}
\end{remark}

After widen the notion of absolute compatibility to general elements in the closed unit ball of a C$^*$-algebra, our next goal is a extension of the characterization of orthogonality established in \cite[Proposition 4.1]{Karn2018}.

\begin{proposition}\label{p characterization orthogonality for general elements}
	Let $a$ and $b$ be two elements in the closed unit ball of a unital C$^*$-algebra $A$. Then the following statements hold:
	\begin{enumerate}[$(a)$]
		\item $a b^*  = 0$ if, and only if, $|a| + |b| \leq 1$ and $a\triangle_d b$ if, and only if, $|a| + |b| \leq 1$ and $a^*\triangle_r b^*$;
		\item $b^* a =0$ if, and only if, $|a^*| + |b^*| \leq 1$ and $a^*\triangle_d b^*$ if, and only if, $|a^*| + |b^*| \leq 1$ and $a\triangle_r b$;
		\item $a\perp b$ if, and only if, $|a| + |b| \leq 1,$ $|a^*| + |b^*| \leq 1,$  $a\triangle_r b$, and $a\triangle_d b$;
	\end{enumerate} When $a$ and $b$ are self adjoint, then $a\perp b$ if, and only if, $|a| + |b| \leq 1,$ and $a\triangle b$.
\end{proposition}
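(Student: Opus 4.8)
The plan is to reduce the entire proposition to the characterisation of orthogonality between \emph{positive} elements provided by conditions $(O.1)$ and $(O.2)$, that is, to \cite[Proposition 4.1]{Karn2018}, by exploiting that domain and range orthogonality of arbitrary elements are detected by their absolute values.

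First I would set up the elementary preliminaries: if $\|a\|\le 1$ then $\||a|\|^2=\|a^*a\|=\|a\|^2\le 1$, and likewise $\||a^*|\|\le 1$; thus $|a|,|b|,|a^*|,|b^*|$ are positive elements of the closed unit ball of $A$, and $(O.1)$--$(O.2)$ may be applied to them. I would also recall (it is already noted in Section 2) that $ab^*=0$ if and only if $|a|\perp|b|$, and, dually, $b^*a=0$ if and only if $|a^*|\perp|b^*|$; if one prefers a self-contained justification, writing the polar decompositions $a=u|a|$, $b=v|b|$ in $A^{**}$ (so $b^*=|b|v^*$, $u^*u|a|=|a|$, $|b|v^*v=|b|$) gives $ab^*=u\,|a|\,|b|\,v^*$ and, after multiplying by $u^*$ on the left and $v$ on the right, $|a|\,|b|=u^*(ab^*)v$, whence $ab^*=0\Leftrightarrow|a|\,|b|=0\Leftrightarrow|a|\perp|b|$ (the last step because $|a|,|b|\ge 0$).

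With these two ingredients, part $(a)$ is immediate: \cite[Proposition 4.1]{Karn2018} applied to $|a|,|b|$ states that $|a|\perp|b|$ if and only if $|a|+|b|\le 1$ and $\big||a|-|b|\big|+\big|1-|a|-|b|\big|=1$, and the last equality is by Definition \ref{def range and domain AC} precisely $a\triangle_d b$; combined with $ab^*=0\Leftrightarrow|a|\perp|b|$ this is the first equivalence in $(a)$, while the second follows at once from $a\triangle_d b\Leftrightarrow a^*\triangle_r b^*$. Part $(b)$ is obtained by applying $(a)$ to the pair $a^*,b^*$ (using $a^*(b^*)^*=a^*b=(b^*a)^*$ and $a^*\triangle_d b^*\Leftrightarrow a\triangle_r b$), and part $(c)$ is the conjunction of $(a)$ and $(b)$, since $a\perp b$ is by definition $ab^*=0$ together with $b^*a=0$. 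For the final assertion, when $a,b$ are self-adjoint one has $|a^*|=|a|$ and $|b^*|=|b|$, so in $(c)$ the two norm inequalities become the single condition $|a|+|b|\le 1$ and the two conditions $a\triangle_d b$, $a\triangle_r b$ collapse to $a\triangle b$.

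I do not anticipate a real obstacle: once the bridge $ab^*=0\Leftrightarrow|a|\perp|b|$ (and its dual) is in place, the argument is entirely bookkeeping with Definition \ref{def range and domain AC}. The only points that need a little care are justifying that bridge cleanly — the polar-decomposition computation above, or, alternatively, arguing inside $A$ via $ab^*=0\Leftrightarrow a^*ab^*b=0$ together with the standard fact that $xy=0\Leftrightarrow x^{1/2}y^{1/2}=0$ for positive $x,y$ — and keeping the $d$/$r$ and $a$/$a^*$ substitutions organised so that the three displayed chains of equivalences come out exactly in the stated form.
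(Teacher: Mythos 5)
Your proposal is correct and follows essentially the same route as the paper: both arguments reduce everything to \cite[Proposition 4.1]{Karn2018} applied to $|a|,|b|$ via the bridge $ab^*=0\Leftrightarrow|a|\perp|b|$, then obtain $(b)$ by the substitution $a\mapsto a^*$, $b\mapsto b^*$ and $(c)$ as the conjunction. Your explicit polar-decomposition justification of the bridge is just a fleshed-out version of the step the paper handles with range projections ($ab^*=a\,r(a^*a)\,r(b^*b)\,b^*$), so there is nothing substantive to add.
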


\begin{proof} Clearly $(c)$ follows from $(a)$ and $(b)$, and $(b)$ follows from $(a)$ by just replacing $a$ and $b$ with $a^*$ and $b^*$, respectively.\smallskip
	
	$(a)$ $(\Rightarrow)$ Suppose $a b^*=0$. It is not hard to see that, in this case, we have $|a|\perp |b|$ in $[0,1]$, and hence by \cite[Proposition 4.1]{Karn2018} we get $|a| + |b| \leq 1$ and $|a|\triangle |b|$, and thus $a\triangle_d b$.\smallskip
	
	$(\Leftarrow)$ Suppose $|a| + |b| \leq 1$ and $a\triangle_d b$ (i.e. $|a|\triangle |b|$). It follows from \cite[Proposition 4.1]{Karn2018} that $|a|\perp |b|$, and consequently $ab^* = a r(a^* a) r(b^*b) b^* =0$.
\end{proof}

The following result, which has been borrowed from \cite{Karn2018} shows how the notion of absolute compatibility is related to the natural Jordan product ($a \circ b := \frac{1}{2} (a b + b a)$).

\begin{proposition}\label{p 00}{\rm\cite{Karn2018}}
	Let $A$ be a unital C$^{\ast}$-algebra and let $0\leq a, b \leq 1$ in $A$. Then the following statements are equivalent:
	\begin{enumerate}[$(a)$]
		\item $a$ is absolutely compatible with $b$;
		\item $2 a \circ b = a + b - \vert a - b \vert$;
		\item $a \circ b, (1 - a) \circ (1 - b) \in A^+$ and $( a \circ b ) \Big( (1 - a) \circ (1 - b) \Big) = 0$;
		\item $a \circ (1 - b), (1 - a) \circ b \in A^+$ and $\Big( a \circ (1 - b) \Big) \Big( (1 - a) \circ b \Big) = 0$.
	\end{enumerate}
\end{proposition}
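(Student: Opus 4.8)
The plan is to prove, for every pair of positive contractions, the equivalences $(a)\Leftrightarrow(b)\Leftrightarrow(c)$; statement $(d)$ is then automatic, since $(d)$ for $(a,b)$ is literally $(c)$ for the pair $(a,1-b)$ (again a pair of positive contractions), while $(a)$ is insensitive to replacing $b$ by $1-b$, as $\Big| |a-(1-b)| \Big|+\Big|1-a-(1-b)\Big|=|1-a-b|+|a-b|$. Throughout I would write $P=a\circ b$, and use $(1-a)\circ(1-b)=1-a-b+P$ together with $(a+b)^{2}-(a-b)^{2}=2(ab+ba)=4P$, so that $(a-b)^{2}=(a+b)^{2}-4P$. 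The other tools needed are, for a self-adjoint $s$: $|s|^{2}=s^{2}$, $s_{\pm}=\frac12(|s|\pm s)\ge 0$, $s_{+}s_{-}=0$, uniqueness of the positive square root, and uniqueness of the Jordan decomposition (if $s=x-y$ with $x,y\ge 0$ and $xy=0$, then $x=s_{+}$ and $y=s_{-}$). Finally, note $\|a-b\|\le 1$ and $\|1-a-b\|\le 1$ because $0\le a,b\le 1$.

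For $(a)\Leftrightarrow(b)$ I would argue directly. If $(a)$ holds, then $|1-a-b|=1-|a-b|$; squaring and using $|1-a-b|^{2}=(1-a-b)^{2}$, $|a-b|^{2}=(a-b)^{2}$ and $(a-b)^{2}=(a+b)^{2}-4P$, everything collapses to $2P=a+b-|a-b|$, which is $(b)$. Conversely, assuming $(b)$, put $z=1-|a-b|$; since $\|a-b\|\le 1$ we have $0\le|a-b|\le 1$, hence $z\ge 0$, and a short computation from $(b)$ and $(a-b)^{2}=(a+b)^{2}-4P$ gives $z^{2}=(1-a-b)^{2}$. Uniqueness of the positive square root then yields $z=|1-a-b|$, which is $(a)$.

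The key move is to recognise that, under $(a)/(b)$, the elements $P$ and $(1-a)\circ(1-b)$ are nothing but the negative and positive parts of the single self-adjoint element $1-a-b$. Indeed, rewriting $(b)$ with the relation $|a-b|=1-|1-a-b|$ furnished by $(a)$ gives $2P=a+b-|a-b|=(a+b-1)+|1-a-b|=2\,(1-a-b)_{-}$, so $P=a\circ b=(1-a-b)_{-}$, and therefore $(1-a)\circ(1-b)=(1-a-b)+P=(1-a-b)+(1-a-b)_{-}=(1-a-b)_{+}$. Both are positive, and their product is $(1-a-b)_{-}(1-a-b)_{+}=0$; this is exactly $(c)$.

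It remains to show $(c)\Rightarrow(b)$. With $Q=(1-a)\circ(1-b)=1-a-b+P$ we have $P-Q=a+b-1$; since $P,Q\ge 0$ and $PQ=0$ (hence $QP=(PQ)^{*}=0$), uniqueness of the Jordan decomposition of $a+b-1$ forces $P=(a+b-1)_{+}$, i.e. $2P=|1-a-b|+(a+b-1)$. Feeding this into $(a-b)^{2}=(a+b)^{2}-4P$ one checks that $(a-b)^{2}=\big(1-|1-a-b|\big)^{2}$; as $\|1-a-b\|\le 1$ the element $1-|1-a-b|$ is positive, so it equals $|a-b|$, which is statement $(a)$, and hence $(b)$ by the first step. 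This closes the cycle. The only non-routine step is the observation in the third paragraph that the whole configuration is governed by the Jordan decomposition of $1-a-b$; without it one is tempted to attack the product identity in $(c)$ head on, which is the apparent main obstacle but is thereby bypassed entirely.
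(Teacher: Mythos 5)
Your argument is correct and complete, and it is worth noting that it is genuinely self-contained, whereas the paper disposes of this proposition in one line by citing Propositions 4.5 and 4.7 of \cite{Karn2018} together with the observation that absolute compatibility of $a$ with $b$ entails that of $\{a,1-a\}$ with $\{b,1-b\}$ (the latter being exactly the symmetry $b\mapsto 1-b$ you use to reduce $(d)$ to $(c)$). Your chain $(a)\Rightarrow(b)\Rightarrow(a)$ by squaring $|1-a-b|=1-|a-b|$ and invoking uniqueness of positive square roots, the identification of $a\circ b$ and $(1-a)\circ(1-b)$ as $(1-a-b)_{-}$ and $(1-a-b)_{+}$ to get $(c)$, and the converse via uniqueness of the orthogonal Jordan decomposition of $a+b-1$, all check out; the only identities doing real work, $(1-a)\circ(1-b)=1-a-b+a\circ b$ and $(a-b)^{2}=(a+b)^{2}-4\,a\circ b$, are verified correctly, and the norm bounds $\|a-b\|\le 1$, $\|1-a-b\|\le 1$ needed for positivity of $1-|a-b|$ and $1-|1-a-b|$ are available from $0\le a,b\le 1$. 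What your route buys is a transparent proof that never leaves elementary C$^{*}$-algebra facts (functional calculus for a single self-adjoint element plus uniqueness of orthogonal decompositions), and the observation that the whole equivalence is governed by the Jordan decomposition of $1-a-b$ is exactly the right organising principle; it would serve as a legitimate replacement for the external citation if one wanted the paper to be self-contained at this point.
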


\begin{proof} The equivalences follow from \cite[Propositions 4.5 and 4.7]{Karn2018} and the fact that if  $a$ is absolutely compatible with $b$ then $\{a, 1 - a \}$ is absolutely compatible with $\{ b, 1 - b \}$.
\end{proof}

We shall explore next the consequences of the above results. The first one is a new characterization of projections and partial isometries.

\begin{corollary}\label{c characterization of projections and partial isometries} Let $A$ be a C$^*$-algebra. Then the following statements hold:
	\begin{enumerate}[$(a)$] \item If $a$ is a positive element in $\mathcal{B}_{A}$, then $a$ is a projection if, and only if, $a\triangle a$;
		\item If $a$ is an element in $\mathcal{B}_{A}$, then $a$ is a partial isometry if, and only if, $a\triangle a$.
	\end{enumerate}
\end{corollary}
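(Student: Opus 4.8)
The plan is to reduce the partial isometry statement in $(b)$ to the projection statement in $(a)$ via the polar-type observation that $a$ is a partial isometry precisely when $|a|$ is a projection, and that $a\triangle a$ is by definition the conjunction of $|a|\triangle|a|$ (domain) and $|a^*|\triangle|a^*|$ (range). So if I can prove $(a)$, then for general $a\in\mathcal B_A$ I get: $a\triangle a \Leftrightarrow |a|\triangle|a|$ and $|a^*|\triangle|a^*| \Leftrightarrow |a|$ and $|a^*|$ are both projections $\Leftrightarrow$ $a$ is a partial isometry (the last equivalence being standard C$^*$-algebra folklore: $|a|$ being a projection forces $a=a|a|$ to satisfy $a^*a=|a|^2=|a|$ idempotent, hence $a$ a partial isometry; conversely a partial isometry has $|a|=a^*a$ a projection). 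Thus the whole weight falls on part $(a)$, which is the main obstacle.

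For $(a)$, let $a$ be positive with $0\le a\le 1$. The ``only if'' direction is immediate: if $a$ is a projection then $|a-a| + |1 - a - a| = |1-2a|$, and since $a$ is a projection $1-2a$ is a symmetry (a self-adjoint unitary in the corner, so $|1-2a| = (1-2a)^2{}^{1/2}$), and $(1-2a)^2 = 1 - 4a + 4a^2 = 1 - 4a + 4a = 1$, whence $|1-2a| = 1$; so $a\triangle a$. For the ``if'' direction, I would invoke Proposition \ref{p 00} with $b=a$: condition $(c)$ there gives $a\circ a = a^2 \in A^+$ (automatic) and $(1-a)\circ(1-a) = (1-a)^2\in A^+$ (automatic) together with the orthogonality relation $a^2 (1-a)^2 = 0$. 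Expanding, $a^2(1-a)^2 = (a(1-a))^2 = (a - a^2)^2 = 0$, and since $a-a^2$ is self-adjoint, $(a-a^2)^2 = 0$ forces $a - a^2 = 0$, i.e. $a = a^2$, so $a$ is a projection. (Equivalently one can run condition $(b)$: $a\triangle a$ gives $2a^2 = 2a - |a-a| = 2a$, hence $a^2 = a$ directly — this is even cleaner and I would present it this way.)

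The subtle point to be careful about is the unitization: when $A$ is non-unital, $a\triangle a$ is defined in $A^{\sim}$, and Proposition \ref{p 00} is stated for a unital C$^*$-algebra, so I must first pass to $A^{\sim}$ (noting $0\le a\le 1$ still holds there and that $a=a^2$ in $A^{\sim}$ is equivalent to $a=a^2$ in $A$, and likewise for the conclusion that $|a|$, $|a^*|$ are projections), and also observe that $|a|$, $|a^*|$ computed in $A$ and in $A^{\sim}$ agree. I expect no real difficulty here beyond bookkeeping. The genuine mathematical content is entirely supplied by the already-established Proposition \ref{p 00} (itself resting on \cite{Karn2018}), so the corollary is short once that machinery is in place; the only thing requiring thought is organizing the reduction in $(b)$ and handling the non-unital case cleanly.
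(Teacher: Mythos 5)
Your proof is correct and uses essentially the same key step as the paper, namely Proposition \ref{p 00}$(b)$ applied with $b=a$ to get $2|a|^2=2|a|$, hence idempotency. The only organizational difference is that you prove $(a)$ first and reduce $(b)$ to it, whereas the paper proves $(b)$ directly (applying Proposition \ref{p 00} to the positive element $|a|$) and obtains $(a)$ as a special case; the mathematical content is identical, and your extra care about the unitization is harmless bookkeeping.
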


\begin{proof} We shall only prove $(b)$ because $(a)$ is a consequence of this statement. Suppose $a$ is an element in $\mathcal{B}_{A}$ with $a\triangle a$. By definition $|a|\triangle |a|$, and by Proposition \ref{p 00}, we conclude that $2 |a|^2 = |a| + |a| - \vert |a| - |a| \vert = 2 |a|,$ which assures that $|a|$ is a projection, and hence $a$ is a partial isometry.\smallskip
	
	If $a$ is a partial isometry in $A$, then $|a|$ is a projection, and thus $|a|\triangle |a|$ as desired.
\end{proof}

Let us observe that the recent survey \cite{FerPe18} gathers a wide list of results characterizing partial isometries. The characterization of partial isometries in terms of absolute compatibility offers a new point of view.

\section{Linear contractions preserving absolutely compatible elements}

We can now deal with linear preservers of absolutely compatible elements between von Neumann algebras. Let $T: A\to B$ be a contractive linear mapping between C$^*$-algebras. We shall say that $T$ \emph{preserves absolutely compatible elements} (respectively, domain or range absolutely compatible elements) if $a\triangle b$ in $\mathcal{B}_M$ implies $T(a)\triangle T(b)$ in $\mathcal{B}_N$ (respectively, $a\triangle_d b$ in $\mathcal{B}_M$ implies $T(a)\triangle_d T(b)$ or $a\triangle_r b$ in $\mathcal{B}_M$ implies $T(a)\triangle_r T(b)$). Since Definition \ref{def range and domain AC} is only valid for elements in the closed unit ball, linear absolutely compatible elements must be assumed to be contractive.\smallskip

Suppose $T: A\to B$ preserves absolutely compatible elements. Let us take $a,b\in \mathcal{B}_{A}$ with $a\perp b$. In this case $a\triangle b$ and $|a| +|b|, |a^*|+|b^*|\leq 1$ (cf. Proposition \ref{p characterization orthogonality for general elements}). By hypothesis, $T(a)\triangle T(b)$, and $|T(a)|,|T(a)^*|, |T(b)|, |T(b)^*|\leq 1$.  However, it is not, a priori, clear that $|T(a)| +|T(b)|, |T(a)^*|+|T(b)^*|\leq 1$. So, it is not obvious that $T$ preserves orthogonality.

\begin{proposition}\label{p presevers on von Neumann} Let $T: M\to N$ be a contractive linear operator between von Neumann algebras. Suppose $T$ preserves domain absolutely compatible elements. Then $T$ is a triple homomorphism. Furthermore, if $T$ also is symmetric, then $\Phi = T(1) T$ is a Jordan\hyphenation{Jordan} $^*$-homomorphism and $T =  T(1) \Phi$. If we additionally assume that $T$ is positive {\rm(}i.e. $T(a)\geq 0$ for all $a\geq0${\rm)}, then $T$ is a Jordan $^*$-homomorphism.
\end{proposition}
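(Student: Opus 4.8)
The plan is to use the preservation hypothesis only through its restriction to the ``diagonal'' pairs $(e,e)$, to deduce that $T$ carries partial isometries to partial isometries, then to let contractivity together with the spectral calculus available in a von Neumann algebra propagate this to full orthogonality preservation, and to finish with Wong's theorem \cite[Theorem 3.2]{Wong2005}. First I would observe that if $e$ is a partial isometry in $M$, then $|e|=(e^{*}e)^{1/2}$ is a projection, hence $|e|\triangle|e|$ by Proposition \ref{p 00}; that is, $e\triangle_d e$. The hypothesis then gives $T(e)\triangle_d T(e)$, i.e. $|T(e)|\triangle|T(e)|$, and Proposition \ref{p 00} (as in the proof of Corollary \ref{c characterization of projections and partial isometries}, which uses only domain absolute compatibility) forces $|T(e)|$ to be a projection, so $T(e)$ is a partial isometry. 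In particular, since $1_M$ is a projection, $u:=T(1)$ is a partial isometry in $N$.

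Next I would upgrade this as follows: if $e\perp f$ in $\mathcal B_M$, then $e\pm f$ are again partial isometries (the domain and range projections of $e$ and those of $f$ are mutually orthogonal), so $v:=T(e)$, $w:=T(f)$ and $v+w=T(e+f)$ are partial isometries and $\|v-w\|=\|T(e-f)\|\le 1$. Adding the operator inequalities $(v+w)^{*}(v+w)\le 1$ and $(v-w)^{*}(v-w)\le 1$ gives $v^{*}v+w^{*}w\le 1$, and symmetrically $vv^{*}+ww^{*}\le 1$; hence the domain projections of $v$ and $w$ are orthogonal and the range projections of $v$ and $w$ are orthogonal, which forces $v^{*}w=0=vw^{*}$, i.e. $T(e)\perp T(f)$. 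Now take $a\perp b$ in $\mathcal B_M$, so $a^{*}b=0=ab^{*}$ and therefore $r(aa^{*})r(bb^{*})=0=r(a^{*}a)r(b^{*}b)$. Writing $a=v_a|a|$ (polar decomposition) and approximating $|a|$ in norm by a finite real combination of mutually orthogonal projections below $r(a^{*}a)$ (and likewise for $b$), one obtains $a\approx\sum_i\lambda_i e_i$ and $b\approx\sum_j\mu_j f_j$ in norm, where the $e_i$ are mutually orthogonal partial isometries, the $f_j$ are mutually orthogonal partial isometries, and $e_i\perp f_j$ for all $i,j$. Since $T(e_i)\perp T(f_j)$ and $T$ is contractive, letting the approximations converge yields $T(a)^{*}T(b)=0=T(a)T(b)^{*}$, i.e. $T(a)\perp T(b)$. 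Thus $T$ is continuous, orthogonality preserving, and $T^{**}(1)=u$ is a partial isometry, so \cite[Theorem 3.2]{Wong2005} shows that $T$ is a triple homomorphism.

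For the remaining assertions, suppose in addition that $T$ is symmetric. Then $u=T(1)$ is a self-adjoint partial isometry, so $p:=u^{2}=u^{*}u=uu^{*}$ is a projection. Applying the triple homomorphism $T$ to the identity $x=\{1,1,x\}$ gives $T(x)=p\circ T(x)$, hence $pT(x)=T(x)=T(x)p$ for every $x\in M$; applying it to $a^{*}=\{1,a,1\}$ gives $T(a^{*})=uT(a)^{*}u$, so by symmetry $T(x)=uT(x)u$ and thus $uT(x)=T(x)u$ for all $x$. Using these identities together with $a^{2}=\{a,a,1\}$ for self-adjoint $a$, a short computation shows that $\Phi:=T(1)\,T=uT$ is symmetric and satisfies $\Phi(a^{2})=\Phi(a)^{2}$ for self-adjoint $a$, hence is a Jordan $^*$-homomorphism, while $T(1)\,\Phi=u^{2}T=pT=T$. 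Finally, if $T$ is positive then it is automatically symmetric and $u=T(1)\ge 0$ is a self-adjoint partial isometry, hence a projection equal to $p$, so $\Phi=pT=T$ is itself a Jordan $^*$-homomorphism.

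I expect the main obstacle to be the middle portion of the second paragraph: the passage from orthogonality preservation on partial isometries to orthogonality preservation on arbitrary elements. This is precisely where the von Neumann algebra hypothesis is indispensable --- through the spectral calculus and the abundance of projections --- and it is the reason the general C$^*$-algebra statement (Theorem \ref{t presevers on Cstar}) requires a genuinely different and more delicate treatment. Everything else is comparatively light once Corollary \ref{c characterization of projections and partial isometries} and \cite[Theorem 3.2]{Wong2005} are available; in particular, it is worth emphasising that the full preservation hypothesis is used only on the diagonal pairs $(e,e)$ with $e$ a partial isometry.
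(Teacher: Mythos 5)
Your argument is correct, and its first half coincides with the paper's: both proofs use the hypothesis only on the diagonal pairs $(e,e)$ to show that $T$ maps partial isometries to partial isometries (via Corollary \ref{c characterization of projections and partial isometries}), and both then show that orthogonal partial isometries go to orthogonal partial isometries --- the paper by citing \cite[Lemma 3.6]{IsKaRo95} applied to the partial isometries $T(e)\pm T(f)$, you by the elementary inequality $(v+w)^*(v+w)+(v-w)^*(v-w)\le 2$, which is a pleasant self-contained substitute. The two proofs diverge at the finish. The paper stays inside the triple-product formalism: for $a=\sum_j\alpha_j e_j$ a positive combination of mutually orthogonal tripotents it computes $\{T(a),T(a),T(a)\}=T(\{a,a,a\})$ directly, extends by norm-density of such elements and continuity of $T$, and concludes by polarization. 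You instead upgrade to full orthogonality preservation on arbitrary pairs (correctly: the spectral projections of $|a|$ and $|b|$ sit under $r(|a|)$ and $r(|b|)$, and $v_a^* v_b=0=v_a v_b^*$ follows from $r(|a^*|)\perp r(|b^*|)$ and $r(|a|)\perp r(|b|)$, so the two families of tripotents are mutually orthogonal and the cross products $T(e_i)^*T(f_j)$, $T(e_i)T(f_j)^*$ all vanish), and then invoke \cite[Theorem 3.2]{Wong2005} with $T^{**}(1)=T(1)$ a partial isometry. This buys you a shorter endgame at the cost of one extra propagation step and an external theorem; the paper's version is self-contained and, more importantly, is the template that survives in the C$^*$-algebra setting of Theorem \ref{t presevers on Cstar}, where the absence of spectral projections inside the algebra makes your orthogonality-propagation step unavailable. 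Your treatment of the symmetric and positive cases (working with $p=T(1)^2$ rather than the decomposition $T(1)=p-q$, and the deduction $y=p\circ y\Rightarrow y=pyp=py=yp$) is equivalent to the paper's and equally correct.
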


\begin{proof} We can always assume that $T$ is non-zero. Let $e$ be a partial isometry in $M$. Since $T(e)\in \mathcal{B}_{N},$ it follows that $e\triangle_d e$ (see Corollary \ref{c characterization of projections and partial isometries}). By hypothesis $T$ preserves absolute compatibility, therefore $T(e)\triangle_d T(e),$ and Corollary \ref{c characterization of projections and partial isometries} implies that $T(e)$ is a partial isometry in $N$.\smallskip
	
	Suppose now that $e$ and $v$ are orthogonal partial isometries in $M$. It follows from the above paragraph that $T(e)\pm T(u) = T(e\pm u)$ is a partial isometry in $N$, and consequently $T(e)\perp T(u)$ (compare, for example, \cite[Lemma 3.6]{IsKaRo95}). Suppose $e_1,\ldots,e_m$ are mutually orthogonal partial isometries in $M$ and $\alpha_1,\ldots,\alpha_m$ are positive real numbers. It follows from the above that the element $\displaystyle a= \sum_{j=1}^m \alpha_j e_j$ satisfies $$\{T(a),T(a),T(a)\} = \left\{ T\left(\sum_{j=1}^m \alpha_j e_j\right), T\left(\sum_{j=1}^m \alpha_j e_j\right), T\left(\sum_{j=1}^m \alpha_j e_j\right) \right\} $$ $$= \sum_{j=1}^m \alpha_j^3 T(\{e_j,e_j,e_j\}) = T(\{a,a,a\}).$$ Since the von Neumann algebra $M$ can be regarded as a JBW$^*$-triple in the sense employed in \cite{Horn87}, we can apply a well known result of JB$^*$-triple theory affirming that every element $b$ in $M$ can be approximated in norm by finite (positive) linear combinations of mutually orthogonal partial isometries (also called tripotents) in $M$, to deduce, from the continuity of $T$, that $T(\{b,b,b\}) = \{T(b),T(b),T(b)\}$, for ever $b\in M$. Since $T$ is complex linear, a standard polarization argument shows that $T$ is a triple homomorphism (compare the proof of \cite[Theorem 2.2]{FerMarPe2004}).\smallskip
	
	Suppose now that $T$ is symmetric. In this case, $T(1)$ must be a hermitian partial isometry, and thus $T(1) = p-q$, where $p$ and $q$ are two orthogonal partial isometries in $N$. We also know that $$T(a) = T\{a,1,1\} = \{T(a),T(1),T(1)\} = \frac12 ( T(a) (p+q) + (p+q) T(a) ),$$ and
	$$T(b) = T\{1,b,1\} = \{T(1),T(b),T(1)\} = (p-q) T(b) (p-q) ,$$
	for all $a \in M$, $b\in M_{sa}$. It follows that $ T(a) (p+q) = (p+q) T(a) =  (p+q) T(a) (p+q) =T(a)$ for every $a\in M$, and $T(b) T(1) = T(b) (p-q) = (p-q) T(b) = T(1) T(b)$ for all $b\in M_{sa}$ (and thus, for every $b\in M$). It is not hard to check that, in this case we have $$\Phi (a^2) = T(1) T(a^2) = T(1) T(\{a,1,a\}) = T(1) \{T(a),T(1),T(a)\} $$ $$= T(1) T(a) T(1) T(a) = \Phi (a)^2,$$ for all $a\in M$. The rest is clear.
\end{proof}

We shall see next that the conclusion of our previous Proposition \ref{p presevers on von Neumann} is also true for contractive linear operators preserving domain absolutely compatible elements between general C$^*$-algebras. The existence of (unital) C$^*$-algebras admitting no non-trivial projections makes invalid our previous arguments in the general case. The difficulties in the general case are stronger. Another handicap is that it is not obvious that if $T:A\to B$ is a contractive linear operator preserving domain absolutely compatible elements, then $T^{**}: A^{**}\to B^{**}$ also preserves domain absolutely compatible elements. Henceforth, the spectrum of an element $a$ in a C$^*$-algebra $A$ will be denoted by $\sigma(a)$.\smallskip

Let us recollect some basic results on the strong$^*$ topology of a von Neumann algebra. By Sakai's theorem (see \cite[Theorem 1.7.8]{S}) the product of every von Neuman algebra is separately weak$^*$-continuous, however joint weak$^*$-continuity is, in general, hopeless. Let $M$ be a von Neumann algebra. The \emph{strong$^*$ topology} on $M$ is the locally convex topology on $M$ defined by all seminorms of the form $\|x\|_{\phi}:=\phi(x x^* + x^* x )^{\frac12}$, where $\phi$ runs in the set of positive norm-one normal functional in $M_*$ (see \cite[Definition 1.8.7]{S}). The strong$^*$ topology is stronger than the weak$^*$-topology, and a functional $\varphi: M\to \mathbb{C}$ is weak$^*$-continuous if, and only if, it is strong$^*$-continuous. In particular, linear maps between von Neumann algebras are strong$^*$-continuous if, and only if, they are weak$^*$-continuous \cite[Theorem 1.8.9, Corollary 1.8.10]{S}. The product of every von Neumann algebra is jointly strong$^*$-continuous on bounded sets (cf. \cite[Proposition 1.8.12]{S}).\smallskip

Let $(c_{\lambda})_{\lambda}$ be a net of positive elements in the closed unit ball of a von Neumann algebra $M$ converging to a positive element $c_0\in \mathcal{B}_{M}$ in the strong$^*$ topology. Let $f: [0,1]\to \mathbb{R}$ be a continuous function. Let us fix a norm-one functional $\phi\in M_*$. By Weierstrass approximation theorem, for each $\varepsilon>0$, there exists a polynomial $p$ satisfying $\|f-p\|_{\infty} <\frac{\varepsilon}{3}$. By the joint strong$^*$-continuity of the product of $M$, the net $(p(c_{\lambda}))_{\lambda}$ converges to $p(c_0)$ in the strong$^*$ topology. We can therefore find $\lambda_0$ such that $|\phi (p(c_{\lambda})-p(c_0))| <\frac{\varepsilon}{3}$ for every $\lambda \geq \lambda_0$, and consequently, $$ |\phi (f(c_{\lambda})-f(c_0))| \leq 2 \|f-p\|_{\infty} + |\phi (p(c_{\lambda})-p(c_0))| <\varepsilon,$$ for every $\lambda \geq \lambda_0$. This shows that $(f(c_{\lambda}))_{\lambda}\to f(c_0)$ in the weak$^*$ topology. That is \begin{equation}\label{eq weak and strong star to weak star cont of the cont functional calculus} \hbox{If $(c_{\lambda})_{\lambda}$ is a bounded net of positive elements in $M$ and $(c_{\lambda})_{\lambda} \to c_0$}
\end{equation}
$$ \hbox{ in the strong$^*$ topology, then for each continuous function $f:[0,1]\to \mathbb{R}$}$$ $$
\hbox{ the net $(f(c_{\lambda}))_{\lambda}$ converges to $f(c_0)$ in the weak$^*$ topology.}$$

Suppose now that $(a_{\lambda})_{\lambda}$ is a net in $\mathcal{B}_{M}$ converging to $a_0$ in the strong$^*$ topology. It follows from the joint strong$^*$ continuity of the product that $(|a_{\lambda}|^2)_{\lambda} = (a_{\lambda}^*a_{\lambda})_{\lambda} \to a_0^* a_0 = |a_0|^2$ in the strong$^*$ topology of $M$. We deduce from \eqref{eq weak and strong star to weak star cont of the cont functional calculus} that $(|a_{\lambda}|)_{\lambda}\to |a_0|$ in the strong$^*$ topology. Fix now a normal state $\phi \in M_*$. The identity
\begin{equation*}
\begin{split}
\left\| |a_{\lambda}| - |a_0|\right\|_{\phi}^2  & = 2 \phi \left( (|a_{\lambda}| - |a_0|)^2 \right) \\
& = 2 \left(\phi \left( |a_{\lambda}|^2 \right) + \phi \left( |a_{0}|^2 \right) - \phi \left( |a_{\lambda}| |a_0| \right) - \phi \left( |a_0| |a_{\lambda}| \right) \right),
\end{split}
\end{equation*} together with the separate weak$^*$-continuity of the product of $M$ prove that $\left\| |a_{\lambda}| - |a_0|\right\|_{\phi}^2\to 0$. That is \begin{equation}\label{eq weak and strong star cont of the absolute value} \hbox{If $(a_{\lambda})_{\lambda}\to a_0$ in the strong$^*$ topology with $(a_{\lambda})_{\lambda}$ bounded,}
\end{equation} $$\hbox{ then $(|a_{\lambda}|)_{\lambda}\to |a_0|$ in the strong$^*$ topology.}$$

\begin{theorem}\label{t presevers on Cstar} Let $T: A\to B$ be a contractive linear operator between two C$^*$-algebras. Suppose $T$ preserves domain absolutely compatible elements. Then $T$ is a triple homomorphism. Furthermore, if $T$ is symmetric, then $\Phi = T^{**}(1) T$ is a Jordan\hyphenation{Jordan} $^*$-homomorphism and $T =  T^{**}(1) \Phi$. If we additionally assume that $T$ is positive, then $T$ is a Jordan $^*$-homomorphism.
\end{theorem}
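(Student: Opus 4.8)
The plan is to prove that $T$ is a triple homomorphism and then deduce the rest for free: once this is known, the two ``furthermore'' assertions follow exactly as in the last part of the proof of Proposition \ref{p presevers on von Neumann}, since those computations only used that $T$ is a triple homomorphism, that $T^{**}(1)$ is a partial isometry (note $\{1,1,1\}=1$, so a triple homomorphism sends $1$ to a tripotent), and the symmetry, respectively positivity, of $T$. So the goal reduces to showing $T(\{a,a,a\}) = \{T(a),T(a),T(a)\}$ for every $a\in A$. I would pass to the second duals: $T^{**}\colon A^{**}\to B^{**}$ is a contractive, weak$^*$-continuous (hence norm-continuous) linear map between von Neumann algebras with $T^{**}|_A = T$. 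The key point I aim for is the statement $(\star)$: \emph{$T^{**}$ maps every partial isometry of $A^{**}$ to a partial isometry of $B^{**}$}, which by Corollary \ref{c characterization of projections and partial isometries} is precisely that $T^{**}$ preserves the relation $e\triangle_d e$ on tripotents.

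Granting $(\star)$, here is how I would finish. If $e,v$ are orthogonal partial isometries in $A^{**}$, then $e\pm v$ are partial isometries, so $T^{**}(e)\pm T^{**}(v)$ are partial isometries and hence $T^{**}(e)\perp T^{**}(v)$ (cf. \cite[Lemma 3.6]{IsKaRo95}, as in the proof of Proposition \ref{p presevers on von Neumann}). Now let $a\in A$, $\|a\|\le 1$, and write $a = u|a|$ for the polar decomposition in $A^{**}$; for $n\in\mathbb{N}$ put $p_k = \chi_{((k-1)/n,\,k/n]}(|a|)$ and $v_k = u\,p_k$, $1\le k\le n$. Then $v_1,\dots,v_n$ are mutually orthogonal partial isometries in $A^{**}$, the element $a_n := \sum_k \frac kn v_k$ satisfies $\|a - a_n\|\le \frac1n$, and $\{a_n,a_n,a_n\} = \sum_k \frac{k^3}{n^3}\,v_k$. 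By the previous observation the $T^{**}(v_k)$ are mutually orthogonal partial isometries, whence $\{T^{**}(a_n),T^{**}(a_n),T^{**}(a_n)\} = \sum_k \frac{k^3}{n^3}\,T^{**}(v_k) = T^{**}(\{a_n,a_n,a_n\})$. Letting $n\to\infty$ and using that $a_n\to a$ in norm in $A^{**}$ together with the norm-continuity of $T^{**}$ and of the triple product, we get $T(\{a,a,a\}) = \{T(a),T(a),T(a)\}$ for all $a\in A$; since $T$ is complex linear, a polarization argument (as in the proof of \cite[Theorem 2.2]{FerMarPe2004}) then shows $T$ is a triple homomorphism.

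To prove $(\star)$, the hypothesis must be transferred from $T$ to $T^{**}$, and I would first record that the relation $\triangle_d$ is closed on bounded subsets of $A^{**}$ for the strong$^*$ topology: if $(a_\lambda)\to a_0$ and $(b_\lambda)\to b_0$ strong$^*$ with all elements in $\mathcal{B}_{A^{**}}$ and $a_\lambda\triangle_d b_\lambda$ for every $\lambda$, then $|a_\lambda|\to|a_0|$ and $|b_\lambda|\to|b_0|$ strong$^*$ by \eqref{eq weak and strong star cont of the absolute value}, the Jordan products $|a_\lambda|\circ|b_\lambda|$ and $(1-|a_\lambda|)\circ(1-|b_\lambda|)$ converge strong$^*$ to $|a_0|\circ|b_0|$ and $(1-|a_0|)\circ(1-|b_0|)$ by joint strong$^*$-continuity of the product on bounded sets, their positivity passes to the limit by weak$^*$-closedness of the positive cone, and the vanishing of $\big(|a_0|\circ|b_0|\big)\big((1-|a_0|)\circ(1-|b_0|)\big)$ follows again by joint strong$^*$-continuity; Proposition \ref{p 00} then gives $|a_0|\triangle|b_0|$, i.e. $a_0\triangle_d b_0$. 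Given a partial isometry $e\in A^{**}$, one then wants nets $(a_\lambda),(b_\lambda)$ in $\mathcal{B}_A$ with $a_\lambda\triangle_d b_\lambda$ for all $\lambda$, converging strong$^*$ to $e$ (or to some convenient partner), such that the images $T(a_\lambda) = T^{**}(a_\lambda)$, $T(b_\lambda) = T^{**}(b_\lambda)$ converge to $T^{**}(e)$ in a topology fine enough — obtained using the separate weak$^*$-continuity of the product of $B^{**}$, \eqref{eq weak and strong star to weak star cont of the cont functional calculus}, and the Jordan-product characterization in Proposition \ref{p 00} — so that $T(a_\lambda)\triangle_d T(b_\lambda)$ forces $T^{**}(e)\triangle_d T^{**}(e)$.

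The main obstacle is exactly this transfer to $T^{**}$, as signalled in the text preceding the statement: since $A$ may contain no non-trivial projection, and hence no non-trivial partial isometry, one cannot approximate a partial isometry $e\in A^{**}$ by partial isometries of $A$ and invoke Corollary \ref{c characterization of projections and partial isometries} on the approximants; the whole difficulty is to construct approximating pairs inside $A$ that simultaneously retain the relation $\triangle_d$ and have images under $T$ converging in a way that survives the passage to the limit of $\triangle_d$. I expect this to be the longest and most technical part of the argument, and it is precisely here that the Kaplansky density theorem and the strong$^*$-continuity facts \eqref{eq weak and strong star to weak star cont of the cont functional calculus} and \eqref{eq weak and strong star cont of the absolute value} are brought into play.
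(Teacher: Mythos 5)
Your reduction to showing that $T$ is a triple homomorphism, and your finishing argument granting the claim $(\star)$, are sound and match the paper's overall strategy. But the heart of the proof is precisely the step you leave open: you state $(\star)$ (that $T^{**}$ sends partial isometries of $A^{**}$ to partial isometries of $B^{**}$), observe correctly that the relation $\triangle_d$ is strong$^*$-closed on bounded sets, and then say that ``one wants nets $(a_\lambda),(b_\lambda)$ in $\mathcal{B}_A$ with $a_\lambda\triangle_d b_\lambda$ converging to $e$'' without producing them. This is not a routine density argument: Kaplansky's theorem gives strong$^*$-approximation of $e$ by elements of $\mathcal{B}_A$, but gives no control whatsoever on whether the approximants satisfy $\triangle_d$ with one another, and the hypothesis on $T$ only applies to pairs in $\mathcal{B}_A$ that actually stand in that relation. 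As written, the proposal therefore has a genuine gap at its central point.

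The paper circumvents this by never proving $(\star)$ in full generality. It fixes $a\in A$ with polar decomposition $a=u|a|$ in $A^{**}$ and only treats the special tripotents $u\chi_{[\alpha,\beta)\cap\sigma(|a|)}$, which suffice because $a$ is a norm limit of positive linear combinations of these. The two constructive ingredients you are missing are: (i) the Akemann--Pedersen lemma \cite[Lemma 2.1]{AkPed77}, which guarantees that $u\,h(|a|)$ lies in $A$ (not merely in $A^{**}$) for every continuous $h$ on $\sigma(|a|)$ with $h(0)=0$; and (ii) the choice of an \emph{outer} approximant $f_n$ (equal to $1$ on $[\alpha,\beta]$, supported in $[\alpha-\frac1n,\beta+\frac1n]$) paired with an \emph{inner} approximant $g_n$ (supported in $[\alpha,\beta]$, equal to $1$ on $[\alpha+\frac1n,\beta-\frac1n]$), so that wherever $|f_n|<1$ one has $g_n=0$ and hence $u f_n(|a|)\triangle_d u g_n(|a|)$ in $\mathcal{B}_A$ by Remark \ref{r new}. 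Passing to the strong$^*$ limit in the identity of Proposition \ref{p 00} yields a relation between $|T^{**}(u\chi_{[\alpha,\beta]\cap\sigma(|a|)})|$ and $|T^{**}(u\chi_{(\alpha,\beta)\cap\sigma(|a|)})|$, and a further approximation (shrinking or enlarging the interval so that the open and closed versions converge to the same tripotent) turns this into $|T^{**}(\cdot)|^2=|T^{**}(\cdot)|$, i.e.\ the images are partial isometries; a two-interval variant with functions taking values $\pm1$ then gives the orthogonality of images that your final step requires. Without a construction of this kind your argument does not close.
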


\begin{proof} Let us fix an element $a$ in $\mathcal{A}$. Since, in general, we can not find an spectral resolution for $a$, we shall employ a device due to Akemann and G.K. Pedersen. Let $a = u |a|$ be the polar decomposition of $a$ in $A^{**}$, where $u$ is a partial isometry in $A^{**}$, which in general does not belong to $A$. It is further known that $u^*u$ is the range projection of $|a|$ ($r(|a|)$ in short), and for each $h\in C(\sigma(|a|)),$ the space of all continuous complex valued functions on $\sigma(|a|)\subseteq [0,1]$ with $h(0)=0,$ the element $u h(|a|)\in A$ (see \cite[Lemma 2.1]{AkPed77}).\smallskip
	
Let us fix a set of the form $\mathcal{C} =[\alpha,\beta]\cap \sigma(|a|)$ with $\alpha,\beta\in [0,1]$. For $n$ large enough we define two positive continuous functions $f_n,g_n: \sigma(|a|)\to \mathbb{C}$ given by
	$$f_n(t):=\left\{%
	\begin{array}{ll}
	1, & \hbox{if $t\in [\alpha, \beta]$} \\
	\hbox{affine}, & \hbox{in $[\alpha-\frac1n,\alpha]\cup [\beta, \beta+\frac1n]$} \\
	0, & \hbox{if $t\in [0,\alpha-\frac1n]\cup [\beta+\frac1n,1]$} \\
	\end{array}%
	\right.$$ and $$ g_n(t):=\left\{%
	\begin{array}{ll}
	1, & \hbox{if $t\in [\alpha+\frac1n, \beta-\frac1n]$} \\
	\hbox{affine}, & \hbox{in $[\alpha,\alpha+\frac1n]\cup [\beta-\frac1n,\beta]$} \\
	0, & \hbox{if $t\in [0,\alpha]\cup [\beta,1]$.} \\
	\end{array}%
	\right.$$ It is no hard to check that the elements $a_n := u f_n(|a|)$ and $b_n :=u g_n(|a|)$ belong to the closed unit ball of $A$ with $a_n \triangle_d b_n$ for $n$ large enough (compare Remark \ref{r new}). Therefore, $T(a_n) \triangle_d T(b_n)$ for $n$ large enough, because, by hypothesis, $T$ preserves domain absolutely compatible elements. Proposition \ref{p 00} implies that \begin{equation}\label{eq 0408 a}  2 |T(a_n)| \circ |T(b_n)| = |T(a_n)| + |T(b_n)| - \vert |T(a_n)| - |T(b_n)| \vert,
	\end{equation} for $n$ large enough.

	Clearly the sequences $(a_n)$ and $(b_n)$ converge in the strong$^*$ topology of $A^{**}$ to the partial isometries $u \chi_{[\alpha,\beta]\cap \sigma(|a|)}$ and $u \chi_{(\alpha,\beta)\cap \sigma(|a|)}$, respectively. As usually, we identify the bidual of the C$^*$-subalgebra of $A$ generated by $|a|$ with its weak$^*$ closure in $A^{**}$. Now, since $T^{**}: A^{**}\to B^{**}$ is strong$^*$-continuous, taking strong$^*$-limits in \eqref{eq 0408 a}, we deduce from \eqref{eq weak and strong star cont of the absolute value} that \begin{equation}\label{eq 0409 b} 2 |T^{**}(u \chi_{[\alpha,\beta]\cap \sigma(|a|)})| \circ |T^{**}(u \chi_{(\alpha,\beta)\cap \sigma(|a|)})| = |T^{**}(u \chi_{[\alpha,\beta]\cap \sigma(|a|)})|
	\end{equation}
	$$  + |T^{**}(u \chi_{(\alpha,\beta)\cap \sigma(|a|)})| - \left\vert |T^{**}(u \chi_{[\alpha,\beta]\cap \sigma(|a|)})| - |T^{**}(u \chi_{(\alpha,\beta)\cap \sigma(|a|)})| \right\vert.$$

	Since partial isometries of the form $u \chi_{(\alpha,\beta)\cap \sigma(|a|)}$ (respectively, $u \chi_{[\alpha,\beta]\cap \sigma(|a|)}$) can be approximated in the strong$^*$ topology by sequences of the form $(u \chi_{(\alpha+\frac1n,\beta-\frac1n)\cap \sigma(|a|)})_n$ and $(u \chi_{[\alpha+\frac1n,\beta-\frac1n]\cap \sigma(|a|)})_n$ (respectively, by sequences of the form $(u \chi_{(\alpha-\frac1n,\beta+\frac1n)\cap \sigma(|a|)})_n$ and $(u \chi_{[\alpha-\frac1n,\beta+\frac1n]\cap \sigma(|a|)})_n$), we can deduce from \eqref{eq 0409 b} that \begin{equation}\label{eq 0409 c} 2 |T^{**}(u \chi_{(\alpha,\beta)\cap \sigma(|a|)})|^2 = 2 |T^{**}(u \chi_{(\alpha,\beta)\cap \sigma(|a|)})|
	\end{equation} (respectively, \begin{equation}\label{eq 0409 d} 2 |T^{**}(u \chi_{[\alpha,\beta]\cap \sigma(|a|)})|^2 = 2 |T^{**}(u \chi_{[\alpha,\beta]\cap \sigma(|a|)})|.)
	\end{equation} The above identities assure that $T^{**}(u \chi_{(\alpha,\beta)\cap \sigma(|a|)})$ and $T^{**}(u \chi_{[\alpha,\beta]\cap \sigma(|a|)})$ are partial isometries. We can now apply similar arguments and the above conclusions to deduce that $T^{**}(u \chi_{(\alpha,\beta]\cap \sigma(|a|)})$ and $T^{**}(u \chi_{[\alpha,\beta)\cap \sigma(|a|)})$ are partial isometries for every $\alpha,\beta\in [0,1]$.\smallskip
	
	Fix now $\alpha_1<\beta_1<\alpha_2<\beta_2$ in $[0,1]$. For $n$ large enough we can consider the positive continuous functions $f_n,g_n: \sigma(|a|)\to \mathbb{C}$ defined by
	$$f_n^{\pm} (t):=\left\{%
	\begin{array}{ll}
	1, & \hbox{if $t\in [\alpha_1, \beta_1]$} \\
	\hbox{affine}, & \hbox{in $[\alpha_1-\frac1n,\alpha_1]\cup [\beta_1, \beta_1+\frac1n]$} \\
	\pm 1, & \hbox{if $t\in [\alpha_2, \beta_2]$} \\
	\hbox{affine}, & \hbox{in $[\alpha_2-\frac1n,\alpha_2]\cup [\beta_2, \beta_2+\frac1n]$} \\
	0, & \hbox{if $t\in [0,\alpha_1-\frac1n]\cup [\beta_1+\frac1n, \alpha_2-\frac1n]\cup [\beta_2+\frac1n,1]$} \\
	\end{array}%
	\right.$$ and $$ g_n^{\pm}(t):=\left\{%
	\begin{array}{ll}
	1, & \hbox{if $t\in [\alpha_1+\frac1n, \beta_1-\frac1n]$} \\
	\hbox{affine}, & \hbox{in $[\alpha_1,\alpha_1+\frac1n]\cup [\beta_1-\frac1n,\beta_1]$} \\
	\pm 1, & \hbox{if $t\in [\alpha_2+\frac1n, \beta_2-\frac1n]$} \\
	\hbox{affine}, & \hbox{in $[\alpha_2,\alpha_2+\frac1n]\cup [\beta_2-\frac1n,\beta_2]$} \\
	0, & \hbox{if $t\in [0,\alpha_1]\cup [\beta_1,\alpha_2]\cup [\beta_2,1]$.} \\
	\end{array}%
	\right.$$ By repeating the above reasonings to the partial isometries $u \chi_{[\alpha_1,\beta_1]\cap \sigma(|a|)}\pm u \chi_{[\alpha_2,\beta_2]\cap \sigma(|a|)}$ and $u \chi_{(\alpha_1,\beta_1)\cap \sigma(|a|)}\pm u \chi_{(\alpha_2,\beta_2)\cap \sigma(|a|)}$, and the functions $f_n^{\pm}$ and $g_n^{\pm}$, we deduce that the elements $$T^{**}(u \chi_{(\alpha_1,\beta_1)\cap \sigma(|a|)} \pm u \chi_{(\alpha_2,\beta_2)\cap \sigma(|a|)}), \ T^{**}(u \chi_{[\alpha_1,\beta_1]\cap \sigma(|a|)}\pm u \chi_{[\alpha_2,\beta_2]\cap \sigma(|a|)}),$$ $$\!T^{**}\!(u \chi_{(\alpha_1,\beta_1]\cap \sigma(|a|)}\pm u \chi_{(\alpha_2,\beta_2]\cap \sigma(|a|)}),\hbox {and } T^{**}(u \chi_{[\alpha_1,\beta_1)\cap \sigma(|a|)}\pm u \chi_{[\alpha_2,\beta_2)\cap \sigma(|a|)}),$$ all are partial isometries. This is enough to conclude that the elements $T^{**}(u \chi_{[\alpha,\beta)\cap \sigma(|a|)})$ and $T^{**}(u \chi_{[\beta,\gamma)\cap \sigma(|a|)})$ are partial isometries for every $\alpha<\beta<\gamma\in [0,1]$. Since the element $a$ can be approximated in norm by a finite positive linear combination of partial isometries of the form $u \chi_{[\alpha,\beta)\cap \sigma(|a|)}$, we prove, as in the proof of Proposition \ref{p presevers on von Neumann}, that $$T(\{a,a,a\}) = \{T(a),T(a),T(a)\},$$ witnessing that $T$ is a triple homomorphism.
\end{proof}

Next, we shall complete the conclusion of Theorem \ref{t presevers on Cstar} with a series of examples.

\begin{remark}\label{r examples star hom}{\rm Let us fix a two (unital) C$^*$-algebras $A$ and $B$. Suppose $\Phi: A\to B$ is a $^*$-homomorphism. Its is easy to check that $\Phi(1)$ is a projection satisfying that $1-\Phi(1)$ is orthogonal to every element in the image of $\Phi$. Let us take $a,b$ in $A$ with $a\triangle_d b$. Since $\Phi (|x|)= |\Phi(x)|$, for every  $x\in A$, we have $$\Big| 1- |\Phi(a)|-|\Phi(b)| \Big| +\Big|  |\Phi(a)|-|\Phi(b)| \Big| \ \ \ \ \ \ \ \ \ \ \ \ \ \ \ \ \ \ \ \ \ \ \ \ \ \ \ \ \ \ \ \ \ \ \ \ $$
		\begin{align*}
		&= \Big| 1 - \Phi(1) +\Phi(1) - \Phi(|a|)-\Phi(|b|) \Big| +\Big|  \Phi(|a|)-\Phi(|b|) \Big| \\
		&= 1 - \Phi(1) +\Phi( | 1-|a|-|b|| ) +\Phi (\Big|  |a|-|b| \Big|) \\
		&=1 - \Phi(1) +  \Phi( | 1-|a|-|b|| + \Big|  |a|-|b| \Big|) = 1-\Phi(1) +\Phi(1) = 1,
		\end{align*}
		witnessing that $\Phi$ preserves domain absolutely compatible elements.\smallskip
		
		Every $^*$-homomorphism between C$^*$-algebras preserves (range/domain) absolutely compatible elements.\smallskip
		
		Let $\Psi: A\to B$ be a $^*$-anti-homomorphism. If we pick $a,b$ in $A$ with $a\triangle_d b$ (respectively, $a\triangle_r b$), it is not hard to check that $$ \Big| 1- |\Psi(a)^*|-|\Psi(b)^*| \Big| +\Big|  |\Psi(a)^*|-|\Psi(b)^*| \Big| \ \ \ \ \ \ \ \ \ \ \ \ \ \ \ \ \ \ \ \ \ \ \ \ \ \ \ \ \ \ \ \ \ \ \ \ $$
		\begin{align*}
		&= \Big| 1 - \Psi(1) +\Psi(1) - \Psi(|a|)-\Psi(|b|) \Big| +\Big|  \Psi(|a|)-\Psi(|b|) \Big| \\
		&= 1 - \Psi(1) +\Psi( | 1-|a|-|b|| ) +\Psi (\Big|  |a|-|b| \Big|) \\
		&=1 - \Psi(1) +  \Psi( | 1-|a|-|b|| + \Big|  |a|-|b| \Big|) = 1-\Psi(1) +\Psi(1) = 1,
		\end{align*}
		(respectively, $$ \Big| 1- |\Psi(a)|-|\Psi(b)| \Big| +\Big|  |\Psi(a)|-|\Psi(b)| \Big| \ \ \ \ \ \ \ \ \ \ \ \ \ \ \ \ \ \ \ \ \ \ \ \ \ \ \ \ \ \ \ \ \ \ \ \ \ \ \ \ \ \ $$
		\begin{align*}
		&= \Big| 1 - \Psi(1) +\Psi(1) - \Psi(|a^*|)-\Psi(|b^*|) \Big| +\Big|  \Psi(|a^*|)-\Psi(|b^*|) \Big| \\
		&= 1 - \Psi(1) +\Psi( | 1-|a^*|-|b^*|| ) +\Psi (\Big|  |a^*|-|b^*| \Big|) \\
		&=1 - \Psi(1) +  \Psi( | 1-|a^*|-|b^*|| + \Big|  |a^*|-|b^*| \Big|) = 1-\Psi(1) +\Psi(1) = 1),
		\end{align*} witnessing that $\Psi(a)\triangle_r \Psi(b)$ (respectively, $\Psi(a)\triangle_d \Psi(b)$). Consequently, $\Psi$ preserves absolutely compatible elements. \smallskip

		Let us take now two unitary elements $u,v\in B(H)$ with $v u, v u\neq 1$. Then the mapping $\Psi: B(H)\to B(H),$ $\Psi(x):=u x v$ is a triple homomorphism, but it is not symmetric nor multiplicative. However, for each $x\in B(H)$, we have $|\Psi(x)|^2=\Psi (x)^* \Psi (x) = v^* x^* u^* u x v = v^* |x|^2 v= \Phi_1(|x|^2)$, where $\Phi_1 : B(H)\to B(H)$ is the $^*$-automorphism given by $\Phi_1 (x) = v^* x v$ ($x\in B(H)$). Given $a,b$ in $A$ with $a\triangle_d b$, it follows that $$ \Big| 1- |\Psi(a)|-|\Psi(b)| \Big| +\Big|  |\Psi(a)|-|\Psi(b)| \Big| \ \ \ \ \ \ \ \ \ \ \ \ \ \ \ \ $$ $$ = \Big| 1- \Phi_1(|a|)-\Phi_1(|b|) \Big| +\Big|  \Phi_1 (|a|)-\Phi_1(|b|) \Big| =1,$$ because $\Phi_1$ is a $^*$-automorphism. That is, $\Psi$ preserves domain absolutely compatible elements. We similarly prove that $\Psi$ preserves range absolutely compatible elements.\smallskip
		
		Contrary to the previous two examples, our third choice shows that the reciprocal implication in Theorem \ref{t presevers on Cstar} is not always valid. Let us consider $T: M_2(\mathbb{C})\to M_2(\mathbb{C})$, $T(a) = a^t$ the transpose of $a$. Clearly, $T$ is a $^*$-anti-automorphism and a triple isomorphism. Let us take two minimal partial isometries $e$ and $v$ such that $e^* e= |e| = \left( \begin{array}{cc}
		1 & 0 \\
		0 & 0 \\
		\end{array}
		\right)=p,$ $v^* v =|v|= \left( \begin{array}{cc}
		0 & 0 \\
		0 & 1 \\
		\end{array}
		\right) = 1-p$, $ee^* =|e^t|= 1-p$ and $v v^* =|v^t|= \frac{1}{2} \left(
		\begin{array}{cc}
		1 & 1 \\
		1 & 1 \\
		\end{array}
		\right) = r$. It is not hard to see that $$| 1-|e|-|v|| + \Big|  |e|-|v| \Big| = 1,\hbox{ and }| 1-|a^t|-|b^t|| + \Big|  |a^t|-|b^t| \Big| = \sqrt{2}\ 1, $$ therefore $T$ does not preserve domain absolutely compatible elements.
	}
\end{remark}

\begin{corollary}\label{c r presevers on Cstar} Let $T: A\to B$ be a contractive linear operator between two C$^*$-algebras. Suppose $T$ preserves range absolutely compatible elements. Then $T$ is a triple homomorphism. Moreover, if $T$ is symmetric, then $\Phi = T^{**}(1) T$ is a Jordan\hyphenation{Jordan} $^*$-homomorphism and $T =  T^{**}(1) \Phi$. If we additionally assume that $T$ is positive, then $T$ is a Jordan $^*$-homomorphism.
\end{corollary}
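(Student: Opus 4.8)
The plan is to deduce this statement from Theorem~\ref{t presevers on Cstar} by passing to the \emph{opposite} C$^*$-algebras. Recall that for a C$^*$-algebra $A$ the opposite algebra $A^{op}$ shares with $A$ the same underlying Banach space, the same involution and the same norm, but is equipped with the reversed product $a\cdot_{op} b:=ba$; it is again a C$^*$-algebra. I would first record some elementary facts: the positive cone of $A^{op}$ equals $A^+$ (since $\{a\cdot_{op} a^* : a\in A\}=\{a^*a:a\in A\}=A^+$), $(A^{op})^{**}=(A^{**})^{op}$ with the same unit, and --- crucially --- the absolute value in $A^{op}$ is given by $|a|_{A^{op}}=(a^*\cdot_{op}a)^{1/2}=(a a^*)^{1/2}=|a^*|_{A}$, where the square root may be taken in the (commutative, hence opposite-invariant) C$^*$-subalgebra generated by $aa^*$. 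More generally, for a self-adjoint element $x$ one has $|x|_{A^{op}}=(x\cdot_{op}x)^{1/2}=(x^2)^{1/2}=|x|_{A}$.

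From these identities it follows immediately that, for $a,b$ in the closed unit ball, $a\triangle_r b$ holds in $A$ if and only if $a\triangle_d b$ holds in $A^{op}$; likewise in $B$. Indeed, the defining equation $\big| |a|_{A^{op}}-|b|_{A^{op}}\big|_{A^{op}}+\big|1-|a|_{A^{op}}-|b|_{A^{op}}\big|_{A^{op}}=1$ for $a\triangle_d b$ in $A^{op}$ is, after the substitutions above, literally the equation $\big| |a^*|_A-|b^*|_A\big|_A+\big|1-|a^*|_A-|b^*|_A\big|_A=1$ defining $a\triangle_r b$ in $A$. Consequently, viewing $T$ as a map $T:A^{op}\to B^{op}$ --- still linear and contractive, as the Banach space and norm are unchanged --- the hypothesis that $T$ preserves range absolutely compatible elements of $A$ translates exactly into the statement that $T:A^{op}\to B^{op}$ preserves domain absolutely compatible elements.

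Theorem~\ref{t presevers on Cstar} then applies and yields that $T:A^{op}\to B^{op}$ is a triple homomorphism, with the stated refinements in the symmetric and positive cases. To finish I would only need to observe that all the relevant algebraic structures are insensitive to the passage to the opposite algebra: the triple product $\{a,b,c\}=\frac12(ab^*c+cb^*a)$ is symmetric in $a$ and $c$, hence literally unchanged when the product is reversed; the Jordan product $a\circ b=\frac12(ab+ba)$ and the involution are unchanged; the notions of symmetric and of positive operator coincide (same involution, same positive cone); and $T^{**}(1)$ denotes the same element of $B^{**}=(B^{op})^{**}$. Thus ``triple homomorphism'', ``Jordan $^*$-homomorphism'' and every supplementary conclusion of Theorem~\ref{t presevers on Cstar} for $T:A^{op}\to B^{op}$ are verbatim the corresponding conclusions for $T:A\to B$, which proves the corollary.

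The only point demanding a little care --- and the nearest thing to an obstacle --- is the verification that the absolute value behaves as claimed under the opposite construction, namely $|a|_{A^{op}}=|a^*|_A$ and $|x|_{A^{op}}=|x|_A$ for self-adjoint $x$; granting this, the reduction to Theorem~\ref{t presevers on Cstar} is purely formal.
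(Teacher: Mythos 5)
Your argument is correct. It exploits the same underlying symmetry as the paper -- that passing from $a$ to $a^*$, equivalently reversing the product, interchanges $\triangle_d$ and $\triangle_r$ -- but implements it in a genuinely different way. The paper keeps the algebras fixed and modifies the operator: it sets $S(x):=T(x^*)^*$, notes via the identity ``$a\triangle_d b$ iff $a^*\triangle_r b^*$'' that $S$ preserves domain absolutely compatible elements, applies Theorem~\ref{t presevers on Cstar} to $S$, and then transfers the conclusion back to $T$ (implicitly using $\{a,b,c\}^*=\{a^*,b^*,c^*\}$ and $T(x)=S(x^*)^*$). You instead keep the operator and replace the algebras by their opposites. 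This costs you some bookkeeping up front -- the identifications $(A^{op})^{**}=(A^{**})^{op}$ (which silently uses Arens regularity of C$^*$-algebras, worth a word), the invariance of the positive cone, of the unit and of $|x|$ for self-adjoint $x$, and above all the identity $|a|_{A^{op}}=|a^*|_A$, which you correctly justify by computing the square root inside the commutative C$^*$-subalgebra generated by $aa^*$. The payoff is that the back-translation step disappears entirely: since the triple product $\{a,b,c\}=\frac12(ab^*c+cb^*a)$, the Jordan product, the involution, symmetry and positivity of operators are all literally unchanged under $A\mapsto A^{op}$, every conclusion of Theorem~\ref{t presevers on Cstar} for $T:A^{op}\to B^{op}$ is verbatim the desired conclusion for $T:A\to B$, including the supplementary statements about $\Phi=T^{**}(1)T$. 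Both routes are sound; the paper's is shorter, while yours makes the ``and hence $T$'' step of the paper's proof completely transparent.
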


\begin{proof} It is easy to check that the linear mapping $S: A\to B$, $S(x) := T(x^*)^*$ preserves domain absolutely compatible elements. Theorem \ref{t presevers on Cstar} assures that $S$ (and hence $T$) is a triple homomorphism.
\end{proof}

The characterization of triple homomorphism in terms of preservers of absolutely compatible elements reads as follows.

\begin{corollary}\label{c AC presevers on Cstar} Let $T: A\to B$ be a contractive linear operator between two C$^*$-algebras. Then $T$ preserves absolutely compatible elements {\rm(}i.e., $a\triangle b\Rightarrow T(a)\triangle T(b)${\rm)} if, and only if, $T$ is a triple homomorphism.
\end{corollary}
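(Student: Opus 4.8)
The plan is to establish the two implications separately; the substantial new ingredient is the implication ``$T$ a triple homomorphism $\Rightarrow$ $T$ preserves absolutely compatible pairs'', while its converse is essentially a rereading of Theorem~\ref{t presevers on Cstar}. For the first implication, let $T: A\to B$ be a triple homomorphism and put $h=T^{**}(1)$, a partial isometry in $B^{**}$. From $T(x)=T\{x,1,1\}=\{T(x),h,h\}$ one deduces $hh^*T(x)=T(x)=T(x)h^*h$ for every $x$, so that $T(A)$ lies in the Peirce-$2$ subspace of $h$ and $\pi(x):=h^*T(x)$ defines a \emph{unital Jordan $^*$-homomorphism} of $A$ into the unital C$^*$-algebra $h^*hB^{**}h^*h$, with $T(x)=h\pi(x)$ (routine JB$^*$-triple bookkeeping; compare \cite{Wong2005,BurFerGarMarPe2008}). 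A Peirce decomposition across the projections $h^*h,\,1-h^*h$ (respectively $hh^*,\,1-hh^*$) gives $|\pi(x)|=|T(x)|\in h^*hB^{**}h^*h$ and shows that the range absolute value of $\pi(x)$ inside $h^*hB^{**}h^*h$ equals $h^*|T(x)^*|h$; since, moreover, $|T(a)^*|,|T(b)^*|\in hh^*B^{**}hh^*$, the relations $T(a)\triangle_d T(b)$ and $T(a)\triangle_r T(b)$ in $B$ are equivalent, respectively, to $\pi(a)\triangle_d\pi(b)$ and $\pi(a)\triangle_r\pi(b)$ in $h^*hB^{**}h^*h$. Hence the first implication reduces to: \emph{every unital Jordan $^*$-homomorphism between unital C$^*$-algebras preserves absolutely compatible pairs.}

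To prove this last assertion I would pass to the biduals and use the classical splitting of a normal Jordan $^*$-homomorphism: there is a central projection $z$ for which $\rho_1:=z\,\pi^{**}|_A$ is a $^*$-homomorphism, $\rho_2:=(1-z)\,\pi^{**}|_A$ is a $^*$-anti-homomorphism, $\pi=\rho_1+\rho_2$, and the ranges of $\rho_1,\rho_2$ sit in orthogonal central summands. Because $|\rho_1(x)|=\rho_1(|x|)$ while $|\rho_2(x)|=\rho_2(|x^*|)$, one gets $|\pi(x)|=\rho_1(|x|)+\rho_2(|x^*|)$; substituting this into $\big||\pi(a)|-|\pi(b)|\big|+\big|1-|\pi(a)|-|\pi(b)|\big|$ and using that absolute values and the two summands split across $z$, this quantity collapses to
\[
\rho_1\!\Big(\big||a|-|b|\big|+\big|1-|a|-|b|\big|\Big)\;+\;\rho_2\!\Big(\big||a^*|-|b^*|\big|+\big|1-|a^*|-|b^*|\big|\Big),
\]
which equals $\rho_1(1)+\rho_2(1)=1$ as soon as $a\triangle b$ (the first bracket being $1$ because $a\triangle_d b$, the second because $a\triangle_r b$). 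Thus $\pi(a)\triangle_d\pi(b)$; running the same computation with $a^*,b^*$ yields $\pi(a)\triangle_r\pi(b)$, hence $\pi(a)\triangle\pi(b)$ and $T(a)\triangle T(b)$. (Alternatively, one may feed the two summands into Remark~\ref{r examples star hom}: $\rho_1$ preserves $\triangle$, $\rho_2$ interchanges $\triangle_d$ with $\triangle_r$, and these recombine over $z$.) This Jordan step is where I expect the most care to be needed: a Jordan $^*$-homomorphism need \emph{not} preserve domain absolute compatibility by itself --- the transpose on $M_2(\mathbb{C})$ is a counterexample (Remark~\ref{r examples star hom}) --- and it is precisely the symmetry of $\triangle$ under interchanging domain and range that neutralizes the $^*$-anti-homomorphism summand $\rho_2$; carrying this symmetry correctly through the Peirce reduction, where $|T(x)|$ and $|T(x)^*|$ live in the two \emph{different} corners $h^*hB^{**}h^*h$ and $hh^*B^{**}hh^*$, is the technical heart of the argument.

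For the converse implication, suppose $T$ preserves absolutely compatible pairs. Here I would revisit the proofs of Proposition~\ref{p presevers on von Neumann} and Theorem~\ref{t presevers on Cstar} and note that the preservation hypothesis is invoked there only at pairs which are in fact absolutely compatible. A partial isometry $e$ satisfies $e\triangle e$ --- not merely $e\triangle_d e$ --- by Corollary~\ref{c characterization of projections and partial isometries}, so $T(e)\triangle T(e)$ and $T(e)$ is again a partial isometry; and the elements $a_n=uf_n(|a|)$, $b_n=ug_n(|a|)$ in $\mathcal B_A$ constructed in the proof of Theorem~\ref{t presevers on Cstar} (with $u$ the partial isometry of the polar decomposition of $a$ in $A^{**}$) satisfy $a_n\triangle b_n$ for $n$ large enough: the identities $|a_n|=f_n(|a|)$, $|b_n|=g_n(|a|)$ give $a_n\triangle_d b_n$ exactly as in that proof, while $|a_n^*|=uf_n(|a|)u^*$ and $|b_n^*|=ug_n(|a|)u^*$ are the images of $f_n(|a|),\,g_n(|a|)$ under the $^*$-isomorphism $x\mapsto uxu^*$ of $r(|a|)A^{**}r(|a|)$ onto $uu^*A^{**}uu^*$, whence $a_n\triangle_r b_n$ as well. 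With this observation the remaining steps --- Proposition~\ref{p 00}, the strong$^*$ limits through $T^{**}$, the approximation of $a$ by finite positive combinations of partial isometries, and the polarization argument --- go through verbatim, and $T$ is a triple homomorphism.
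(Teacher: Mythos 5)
Your proof is correct and follows essentially the same route as the paper: the ``if'' direction via the Peirce reduction to a unital Jordan $^*$-homomorphism $\pi(x)=h^*T(x)$, its splitting into a $^*$-homomorphism and a $^*$-anti-homomorphism with orthogonal ranges, and the identity $|\pi(x)|=\rho_1(|x|)+\rho_2(|x^*|)$; the ``only if'' direction by rerunning Theorem~\ref{t presevers on Cstar}. Your explicit check that the auxiliary pairs $a_n=uf_n(|a|)$, $b_n=ug_n(|a|)$ satisfy $a_n\triangle_r b_n$ as well as $a_n\triangle_d b_n$ (via $|a_n^*|=uf_n(|a|)u^*$) is a worthwhile refinement, since the paper's one-line appeal to Theorem~\ref{t presevers on Cstar} silently needs exactly this observation, the hypothesis of the corollary being formally incomparable to that of the theorem.
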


\begin{proof} The ``only if'' implication follows from Theorem \ref{t presevers on Cstar}. To prove the ``if'' implication suppose that $T$ is a triple homomorphism. It follows from the separate weak$^*$-continuity of the product of $A^{**}$ and the weak$^*$-density of $A$ in $A^{**}$ (Goldstine's theorem) that $T^{**} : A^{**}\to B^{**}$ is a triple homomorphism. We can therefore assume that $A$ and $B$ are von Neumann algebras and $T$ is weak$^*$ continuous.\smallskip
	
	Let $e=T(1)$. Clearly, $e$ is a partial isometry in $B$, $T(A)\subseteq ee^* B e^* e = \{e,\{e,T(A),e\},e\} = \{T(1),\{T(1),T(A),T(1)\},T(1)\},$ $e^*e B e^* e$ is a von Neumann subalgebra of $B$, and the mapping $\Phi: A\to e^*e B e^* e$, $\Phi (a):= e^* T(a)$ is a Jordan $^*$-homomorphism. By \cite[Theorem 2.3]{Bresar89} or by \cite[Theorem 10]{Kad51b}, we can find two weak$^*$-closed ideals $I$ and $J$ of $A$ such that $A= I\oplus^{\infty} J$, $I\perp J$, $\Phi_1=\Phi|_{I}: I \to e^*e B e^* e$ is a $^*$-homomorphism and $\Phi_2=\Phi|_{J}: J \to e^*e B e^* e$ is a $^*$-anti-homomorphism, and $\Phi_1 (a)\perp \Phi_2(b)$, for every $a\in I$, $b\in J$. We have seen in Remark \ref{r examples star hom} that $\Phi_1$ and $\Phi_2$ both preserve absolutely compatible elements. Let us denote $p_1=\Phi_1(1)$, $p_2 =\Phi_2(1)$, and let $\pi_1$ and $\pi_2$ denote the natural projections of $A$ onto $I$ and $J$, respectively. Since $T(x) = e \Phi(x)$ for every $x\in A$, the identities
	$$|T(a)|^2 = T(a)^* T(a) = \Phi(a)^* e^* e \Phi (a) = \Phi (a^*) \Phi(a) $$ $$ = \Phi_1 (\pi_1( a^*)) \Phi_1(\pi_1(a)) + \Phi_2 (\pi_2( a^*)) \Phi_2(\pi_2(a)) = \Phi_1 (|\pi_1( a)|^2) + \Phi_2 (|\pi_2(a)^*|^2),$$ and
	$$|T(a)^*|^2 = T(a) T(a)^* = e \Phi(a) \Phi (a)^* e^* = e \Phi_1 (|\pi_1(a)^*|^2)  e^*+ e \Phi_2 (|\pi_2(a)|^2)  e^*,$$ hold for every $a\in A$. We observe that the mapping $x\mapsto e \Phi_1 (x)  e^* = \widetilde{\Phi}_1 (x)$ from $I$ into $e e^* B e e^*$ is a $^*$-homomorphism and $x\mapsto e \Phi_2 (x)  e^*= \widetilde{\Phi}_2 (x)$ from $J$ into $e e^* B e e^*$ is a $^*$-anti-homomorphism.\smallskip

If we take $a,b\in A$ with $a \triangle b,$ by applying that $\Phi_1$ and $\Phi_2$ have orthogonal images, we conclude, via Remark \ref{r examples star hom}, that  $$\Big|1-|T(a)|-|T(b)|\Big| +\Big| |T(a)|-|T(b)| \Big| \ \ \ \ \ \ \ \ \ \ \ \ \ \ \ \ \ \ \ \  \  \ \ \ \ \ \ \ \ \ \ \ \ \ \ \ \ \ \ \ \ \ \ \ \ \ \ $$ \vspace*{-4mm}
	\begin{align*} &= \Big|1-\Phi_1 (|\pi_1( a)|) - \Phi_2 (|\pi_2(a)^*|)- \Phi_1 (|\pi_1( b)|) - \Phi_2 (|\pi_2(b)^*|) \Big|  \\
	&+\Big|  \Phi_1 (|\pi_1( a)|) + \Phi_2 (|\pi_2(a)^*|) - \Phi_1 (|\pi_1( b)|) - \Phi_2 (|\pi_2(b)^*|) \Big| \\
	&= 1-(p_1+p_2) + \Big|p_1-\Phi_1 (|\pi_1( a)|) - \Phi_1 (|\pi_1( b)|) \Big| \\
	&+\Big|  \Phi_1 (|\pi_1( a)|) - \Phi_1 (|\pi_1( b)|) \Big|  + \Big|p_2- \Phi_2 (|\pi_2(a)^*|)- \Phi_2 (|\pi_2(b)^*|) \Big| \\
	&+\Big|  \Phi_2 (|\pi_2(a)^*|) - \Phi_2 (|\pi_2(b)^*|) \Big| \\
	&= 1-(p_1+p_2) + \Phi_1 \left(\Big|\pi_1(1) -|\pi_1( a)| - |\pi_1( b)| \Big|\right) \\
	& +\Phi_1 \left(\Big|  |\pi_1( a)| - |\pi_1( b)| \Big|\right)  + \Phi_2 \left(\Big|\pi_2(1) - |\pi_2(a)^*|- |\pi_2(b)^*| \Big|\right)   \\
	&+\Phi_2\left(\Big| |\pi_2(a)^*| - |\pi_2(b)^*| \Big|\right) = 1-p_1-p_2+p_1+p_2=1,
	\end{align*}
	and similarly,
	$$\Big|1-|T(a)^*|-|T(b)^*|\Big| +\Big| |T(a)^*|-|T(b)^*| \Big| \ \ \ \ \ \ \ \ \  \ \ \ \ \ \ \ \ \ \ \ \ \ \  \ \ \ \ \ \ \ \ \ \ \ \  \ \ \ \  \ \ \ \ $$ \vspace*{-4mm}
	\begin{align*}
	&= 1-(p_1+p_2) + \widetilde{\Phi}_1 \left(\Big|\pi_1(1) -|\pi_1( a)^*| - |\pi_1( b)^*| \Big|\right) \\
	&+\widetilde{\Phi}_1 \left(\Big|  |\pi_1( a)^*| - |\pi_1( b)^*| \Big|\right) + \widetilde{\Phi}_2 \left(\Big|\pi_2(1) - |\pi_2(a)|- |\pi_2(b)| \Big|\right) \\
	&+\widetilde{\Phi}_2\left(\Big| |\pi_2(a)| - |\pi_2(b)| \Big|\right) = 1-p_1-p_2+p_1+p_2=1.
	\end{align*} We have therefore shown that $T(a)\triangle T(b)$, which finishes the proof.
\end{proof}

Linear absolutely compatible preservers are naturally linked to another class of maps which has been well studied in the setting of preservers. Let $A$ and $B$ denote two C$^*$-algebras. Accordingly to the standard notation in \cite{Gardner1979,Gardner1979a} (see also \cite{GuanWangHou2015,Mol1996,Radja2003,RadjaSeddi2001} and \cite{Tagh2012}), we say that a linear map $\Phi: A\to B$ \emph{preserves absolute values} if $\Phi (|a|) = \left| \Phi(a)\right|$, for every $a\in A$. The detailed study published by L.T. Gardner in \cite{Gardner1979,Gardner1979a} shows that a bounded linear mapping $T: A\to B$ preserves absolute values if, and only if, it is 2-positive and preserves elements having zero product if, and only if, it is 2-positive and preserves positive elements having zero product if, and only if, there exists a positive element $b\in B^{**}$, and a $^*$-homomorphism $\Psi: A\to B^{**}$ such that $b$ is supported on $\displaystyle \overline{\bigcup_{a} \hbox{range}\Phi (a)}$ and centralizing $\Phi (A)$, with $\Phi (a) = b \Psi (a)$, for all $a\in A$. Triple homomorphisms between C$^*$-algebras preserve absolutely compatible elements but, in general, they do not preserve absolute values (compare Remark \ref{r examples star hom} and Corollary \ref{c AC presevers on Cstar}). Let $T: A\to B$ be a contractive linear map between unital C$^*$-algebras. It can be concluded that $T$ preserves absolutely compatible elements if $T$ preserves absolute values and $T(1)$ is a partial isometry.\smallskip

\textbf{Acknowledgements} Third author partially supported by the Spanish Ministry of Economy and Competitiveness (MINECO) and European Regional Development Fund project no. MTM2014-58984-P and Junta de Andaluc\'{\i}a grant FQM375.


\begin{thebibliography}{80}
\bibitem{AkPed77} C.A. Akemann, G.K. Pedersen, Ideal perturbations of elements in C$^*$-algebras, \emph{Math. Scand.} \textbf{41}, no. 1, 117-139 (1977).

\bibitem{BarDanHorn} T.J. Barton, T. Dang, G. Horn, Normal representations of Banach Jordan triple systems, \emph{Proc. Amer. Math. Soc.} \textbf{102}, no. 3, 551-555 (1988).

\bibitem{Bresar89} M. Bre\v{s}ar, Jordan mappings of semiprime rings, \emph{J. Algebra} \textbf{127}, 218-228 (1989).


\bibitem{BurFerGarMarPe2008} M. Burgos, F.J. Fern{\' a}ndez-Polo, J.J. Garc{\'e}s, J. Mart{\'i}nez Moreno, A.M. Peralta, Orthogonality preservers in C$^*$-algebras, JB$^*$-algebras and JB$^*$-triples, \emph{J. Math. Anal. Appl.}, \textbf{348}, 220-233 (2008).

\bibitem{BurFerGarPe2009} M. Burgos, F.J. Fern{\' a}ndez-Polo, J.J. Garc{\'e}s, A.M. Peralta, Orthogonality preservers revisited, \emph{Asian-Eur. J. Math.} \textbf{2}, no. 3, 387-405 (2009).

\bibitem{BurGarPe2011} M. Burgos, J.J. Garc{\'e}s, A.M. Peralta, Automatic continuity of biorthogonality preservers between compact C$^*$-algebras and von Neumann algebras, \emph{J. Math. Anal. Appl.} \textbf{376}, 221-230 (2011).


\bibitem{FerMarPe2004} F.J. Fern{\'a}ndez-Polo, J. Mart{\'i}nez, A.M. Peralta, Geometric characterization of tripotents in real and complex JB$^*$-triples, \emph{J. Math. Anal. Appl.} \textbf{295}, 435-443 (2004).

\bibitem{FerMarPe2012} F.J. Fern{\'a}ndez-Polo, J. Mart{\'i}nez, A.M. Peralta, Contractive perturbations in JB$^*$-triples, \emph{J. London Math. Soc.} (2) \textbf{85} 349-364  (2012).

\bibitem{FerPe18} F.J. Fern\'andez-Polo, A.M. Peralta, Partial Isometries: a survey, \emph{Adv. Oper. Theory} \textbf{3}, no. 1, 87-128 (2018).

\bibitem{GarPe2014} J.J. Garc{\'e}s, A.M. Peralta, Orthogonal forms and orthogonality preservers on real function algebras, \emph{Linear Multilinear Algebra} \textbf{62}, no. 3, 275-296 (2014).

\bibitem{Gardner1979} L.T. Gardner, Linear maps of C$^*$-algebras preserving the absolute value, \emph{Proc. Amer. Math. Soc.} \textbf{76}, no. 2, 271-278 (1979).

\bibitem{Gardner1979a} L.T. Gardner, A dilation theorem for $|.|$-preserving maps of C$^*$-algebras, \emph{Proc. Amer. Math. Soc.} \textbf{73}, no. 3, 341-345 (1979).

\bibitem{GuanWangHou2015} Y. Guan, C. Wang, J. Hou, Additive maps on C$^*$-algebras commuting with $|\cdot|^k$ on normal elements, \emph{Bull Iranian Math Soc.} \textbf{41}, 85-98 (2015).

\bibitem{Horn87} G. Horn, Characterization of the predual and ideal structure of a JBW$^*$-triple, \emph{Math. Scand.} \textbf{61}, no. 1, 117-133 (1987).

\bibitem{IsKaRo95} J.M. Isidro, W. Kaup, A. Rodr{\'\i}guez, On real forms of JB$^*$-triples, \emph{Manuscripta Math.} \textbf{86}, 311-335 (1995).


\bibitem{Kad51b} R.V. Kadison, Isometries of operator algebras, \emph{Ann. of Math.} \textbf{54}, 325-338 (1951).





\bibitem{Karn2018} A.K. Karn, {Algebraic orthogonality and commuting projections in operator algebras}, \emph{Acta Sci. Math. (Szeged)} \textbf{84}, 323-353 (2018).


\bibitem{LiuChouLiaoWong2018} J.-H.Liu, Ch.-Y. Chou, Ch.-J. Liao, Ng.-Ch. Wong, Disjointness preservers of AW$^*$-algebras, \emph{Linear Algebra Appl.} \textbf{552}, 71-84 (2018).

\bibitem{LiuChouLiaoWong2018b} J.-H.Liu, Ch.-Y. Chou, Ch.-J. Liao, Ng.-Ch. Wong, Linear disjointness preservers of operator algebras and related structures, \emph{Acta Sci. Math. (Szeged)} \textbf{84}, no. 1-2, 277-307 (2018).

\bibitem{Mol1996} L. Moln{\'a}r, Two characterisations of additive $^*$-automorphisms of $B(H)$, \emph{Bull. Austral. Math. Soc.} \textbf{53}, no. 3, 391-400 (1996).


\bibitem{OikPe2013} T. Oikhberg, A.M. Peralta, Automatic continuity of orthogonality preservers on a non-commutative $L^p(\tau)$ space, \emph{J. Funct. Anal.} \textbf{264}, no. 8, 1848-1872 (2013).

\bibitem{OikPePu2013} T. Oikhberg, A.M. Peralta, D. Puglisi, Automatic continuity of orthogonality or disjointness preserving bijections, \emph{Rev. Mat. Complut.} \textbf{26}, no. 1, 57-88 (2013).

\bibitem{Ped} G.K. Pedersen, \emph{C$^*$-algebras and their automorphism groups},
London Mathematical Society Monographs Vol. 14, Academic Press, London, 1979.


\bibitem{Pe2017} A.M. Peralta, Orthogonal forms and orthogonality preservers on real function algebras revisited, \emph{Linear Multilinear Algebra} \textbf{65}, no. 2, 361-374 (2017).

\bibitem{Radja2003}  M. Radjabalipour, Additive mappings on von Neumann algebras preserving absolute values, \emph{Linear Algebra Appl.} \textbf{368}, 229-241 (2003).

\bibitem{RadjaSeddi2001} M. Radjabalipour, K. Seddighui, Y. Taghavi, Additive mappings on operator algebras preserving absolute values, \emph{Linear Algebra Appl.} \textbf{327}, 197-206 (2001).

\bibitem{S} S. Sakai, \emph{C$^*$-algebras and $W^*$-algebras}. Springer Verlag. Berlin, 1971.

\bibitem{Tagh2012} A. Taghavi, Additive mappings on C$^*$-algebras preserving absolute values, \emph{Linear Multilinear Algebra} \textbf{60}, no. 1, 33-38 (2012).



\bibitem{Wolff94} M. Wolff, Disjointness preserving operators in C$^*$-algebras,
\emph{Arch. Math.} \textbf{62}, 248-253 (1994).

\bibitem{Wong2005} N. Wong, Triple homomorphisms of C$^*$-algebras,
\emph{Southeast Asian Bulletin of Mathematics}  \textbf{29},
401-407 (2005).

\end{thebibliography}
\end{document}